\documentclass[a4paper, 10pt, reqno]{amsart}
\usepackage[latin1]{inputenc}
\usepackage[francais]{babel}
\usepackage{amsfonts}
\usepackage{amsmath}
\usepackage{amssymb}
\usepackage{amsthm}
\usepackage{amsxtra}
\usepackage{amstext}
\usepackage{eufrak}
\usepackage{latexsym}
\usepackage[T1]{fontenc}
\usepackage[all]{xy}

\setcounter{tocdepth}{1}

\def\FF{\mathbb{F}}
\def\FFF{\overline{\mathbb{F}}}

\def\NN{\mathbb{N}}

\def\QQ{\mathbb{Q}}

\def\ZZ{\mathbb{Z}}

\def\1{\mathbf{1}}

\newtheorem{theorem}{Th\'eor\`eme}[section]
\newtheorem{corollary}[theorem]{Corollaire}
\newtheorem{proposition}[theorem]{Proposition}
\newtheorem{lemma}[theorem]{Lemme}
\newtheorem{rem}[theorem]{Remarque}
\newenvironment{remark}{\begin{rem} \rm }{\end{rem}}

\numberwithin{equation}{section}

\newcommand{\defeq}{\stackrel{\textrm{\tiny{def}}}{=}}
\newcommand{\maq}[4]{\left(\begin{array}{cc} #1&#2\\#3&#4\end{array}\right)}
\newcommand{\fpbar}{\overline{\mathbb{F}}_{p}}
\newcommand{\sym}[1]{\mathrm{Sym}^{#1}\fpbar^2}
\newcommand{\gl}[1]{\mathrm{GL}_{2}(#1)}

\newcommand{\fp}{\mathbb{F}_{p}}


\newcommand{\N}{\mathbf{N}}
\newcommand{\qp}{\mathbb{Q}_{p}}

\newcommand{\zp}{\mathbb{Z}_{p}}




\newcommand{\ind}[3]{\mathrm{Ind}_{#1}^{#2}#3}

\newcommand{\kop}[1]{K_{0}(p^{#1})}



\newcommand{\stand}[2]{\lbrack #1,#2\rbrack}







\title{Structure interne des repr\'esentations modulo $p$ de $SL_{2}(\QQ_{p})$}
\author{Ramla Abdellatif \and Stefano Morra}
\date{}
\address{Laboratoire de Math\'ematiques - B\^atiment 425 - Universit\'e Paris-Sud XI, 91 405 Orsay; \mbox{Ramla.Abdellatif@math.u-psud.fr}}
\address{Institut de Math\'ematiques et de Mod\'elisation de Montpellier - place Eug\`ene Bataillon, case courrier 051, 34095 Montpellier ; \mbox{stefano.morra@math.univ-montp2.fr}}
\keywords{repr\'esentations supersinguli\`eres, programme de Langlands modulo $p$, poids de Serre, filtration par le socle}
\subjclass{11F85, 22E50}
\begin{document}
\maketitle
\begin{abstract}
Soit $p \geq 5$ un nombre premier. \`A l'aide de travaux ant\'erieurs des deux auteurs, nous d\'eterminons la filtration par le $SL_{2}(\ZZ_{p})$-socle des $\FFF_{p}$-repr\'esentations lisses irr\'eductibles de $SL_{2}(\QQ_{p})$.
\end{abstract}

\section{Introduction}
\label{Intro}
Soit $p$ un nombre premier, que l'on choisit comme uniformisante de $\ZZ_{p}$, soit $F$ une extension finie de $\QQ_{p}$ et soit $\FFF_{p}$ une cl\^oture alg\'ebrique du corps r\'esiduel $\FF_{p}$ de $\ZZ_{p}$. Les r\'ecents travaux de Breuil-Pa$\check{\text{s}}$k$\bar{\text{u}}$nas \cite{BP} et Hu \cite{yongquan} ayant trait \`a la recherche d'une classification des repr\'esentations lisses irr\'eductibles de $GL_{2}(F)$ sur $\FFF_{p}$ mettent en valeur l'importance cruciale de la compr\'ehension des extensions entre repr\'esentations lisses irr\'eductibles d'un sous-groupe ouvert compact maximal hyperspecial\footnote{De telles repr\'esentations sont parfois appel\'ees \emph{poids de Serre} dans la litt\'erature.} $K$ fix\'e de $GL_{2}(F)$ qui apparaissent comme sous-quotients d'une $\FFF_{p}$-repr\'esentation lisse irr\'eductible donn\'ee.

Dans cette optique, nous nous proposons d'\'etudier la filtration par le $SL_{2}(\ZZ_{p})$-socle des repr\'esentations lisses irr\'eductibles de $SL_{2}(\QQ_{p})$ sur $\FFF_{p}$. Pour ce faire, nous allons utiliser certains travaux ant\'erieurs des deux auteurs : en effet, le premier auteur donne dans \cite{ramla} une classification compl\`ete des $\FFF_{p}$-repr\'esentations lisses irr\'eductibles de $SL_{2}(\QQ_{p})$ tandis que, pour $p$ est impair, le second auteur d\'etermine dans \cite{stefano} la filtration par le $GL_{2}(\ZZ_{p})$-socle des $\FFF_{p}$-repr\'esentations lisses irr\'eductibles de $GL_{2}(\QQ_{p})$.

Avant d'\'enoncer notre r\'esultat, nous rappelons que pour tout entier $n \in \ZZ$, on note $\chi^{s}_{n}$ le $\FFF_{p}$-caract\`ere du groupe $B(\FF_{p})$ des matrices triangulaires sup\'erieures de $GL_{2}(\FF_{p})$ d\'efini par 
\begin{equation}
\label{defchins}
\chi^{s}_{n}\left(\left(\begin{array}{cc} a & b \\ 0 & d\end{array}\right)\right) := d^{n}\ , 
\end{equation}
que l'on d\'esigne par $\frak{a}$ le $\FFF_{p}$-caract\`ere de $B(\FF_{p})$ d\'efini par $\frak{a}\left(\left(\begin{array}{cc} a & b \\ 0 & d\end{array}\right)\right) := ad^{-1}$, par $\omega$ le $\FFF_{p}$-caract\`ere lisse de $\QQ^{\times}_{p}$ trivial en $p$ dont la restriction \`a $\ZZ_{p}^{\times}$ est donn\'ee par l'application de r\'eduction modulo $p$, et que pour tout coefficient $\lambda \in \FFF_{p}^{\times}$ non nul, on note $\mu_{\lambda}$ le $\FFF_{p}$-caract\`ere lisse non ramifi\'e de $\QQ^{\times}_{p}$ envoyant $p$ sur $\lambda$.
\begin{theorem}
\label{thmppaldelanote}
Soit $r \in \{0, \ldots , p-1\}$ et soit $\lambda \in \FFF^{\times}_{p}$. Notons $I_{S}$ le sous-groupe d'Iwahori standard de $SL_{2}(\QQ_{p})$, $K_{S} := SL_{2}(\ZZ_{p})$ et $B_{S}$ le sous-groupe des matrices triangulaires sup\'erieures de $SL_{2}(\QQ_{p})$.
\begin{enumerate}
\item La filtration par le $K_{S}$-socle de l'induite parabolique $\ind{B_{S}}{G_{S}}(\mu_{\lambda}\omega^{p-1-r})$ est donn\'ee par 
	\begin{eqnarray*}
	\mathrm{SocFil}(\ind{I_S}{K_S}(\chi_{r}^{s}))\textbf{---}\mathrm{SocFil}(\ind{I_S}{K_S}(\chi_{r-2}^s))\textbf{---}
	\mathrm{SocFil}(\ind{I_S}{K_S}(\chi_{r-4}^s))\textbf{---}\dots \ .
	\end{eqnarray*}
\item La filtration par le $K_{S}$-socle de la repr\'esentation de Steinberg $St_{S}$ est donn\'ee par  
	\begin{eqnarray*}
	\sym{p-1}\textbf{---}\mathrm{SocFil}(\ind{I_{S}}{K_{S}}(\frak{a}))\textbf{---}\mathrm{SocFil}(\ind{I_{S}}{K_{S}}(\frak{a}^{2}))\textbf{---} 
	\mathrm{SocFil}(\ind{I}{K}(\frak{a}^{3}))\textbf{---} \dots \ .
	\end{eqnarray*}
\item La filtration par le $K_{S}$-socle de la repr\'esentation supersinguli\`ere $\pi_{r}$ est donn\'ee par 
	\begin{eqnarray*}
	\sym{r}\textbf{---}\mathrm{SocFil}(\ind{I_S}{K_S}(\chi_{-r-2}^s))\textbf{---}
	\mathrm{SocFil}(\ind{I_S}{K_S}(\chi_{-r-4}^s))\textbf{---}\dots \ . 
	\end{eqnarray*}
\end{enumerate}
\end{theorem}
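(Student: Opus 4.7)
Le plan consiste \`a d\'eduire le Th\'eor\`eme~\ref{thmppaldelanote} de l'\'enonc\'e analogue pour $\gl{\QQ_p}$ \'etabli dans \cite{stefano}, en restreignant la filtration par le $\gl{\ZZ_p}$-socle jusqu'\`a $K_{S}=\esel{\ZZ_p}$. Pour chaque $\FFF_p$-repr\'esentation lisse irr\'eductible $\pi$ de $G_{S}$ class\'ee dans \cite{ramla}, la premi\`ere \'etape consiste \`a choisir une $\FFF_p$-repr\'esentation lisse irr\'eductible $\widetilde{\pi}$ de $\gl{\QQ_p}$ dont la restriction \`a $G_{S}$ contient $\pi$ comme facteur direct~: pour la partie~(1) on prend une induite parabolique appropri\'ee de $\gl{\QQ_p}$~; pour la partie~(2) on prend la repr\'esentation de Steinberg de $\gl{\QQ_p}$~; pour la partie~(3) on prend une repr\'esentation supersinguli\`ere irr\'eductible de $\gl{\QQ_p}$ de caract\`ere central convenable, dont la d\'ecomposition de la restriction \`a $G_{S}$ est analys\'ee dans \cite{ramla}.

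La deuxi\`eme \'etape est de lire la filtration par le $K$-socle de $\widetilde{\pi}$ gr\^ace au r\'esultat principal de \cite{stefano}, puis de restreindre chaque couche \`a $K_{S}$. Deux compatibilit\'es \'el\'ementaires interviennent ici. D'une part, pour tout $n \in \ZZ$, on dispose de l'identification
\[
\ind{I}{K}{\chi_{n}^{s}}\big|_{K_{S}} \simeq \ind{I_{S}}{K_{S}}{\chi_{n}^{s}|_{B(\FF_p)\cap\esel{\FF_p}}},
\]
qui r\'esulte de la formule de Mackey jointe \`a la d\'ecomposition $K = I \cdot K_{S}$. D'autre part, les poids de Serre $\sym{r}$, pour $0 \leq r \leq p-1$, restent irr\'eductibles et deux \`a deux non isomorphes par restriction \`a $\esel{\FF_p}$, et \'epuisent les classes d'isomorphisme des $\FFF_p$-repr\'esentations lisses irr\'eductibles de $\esel{\FF_p}$. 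Ces deux faits identifient chaque gradu\'e de la filtration restreinte avec le terme correspondant apparaissant dans l'\'enonc\'e du Th\'eor\`eme~\ref{thmppaldelanote}.

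Le point d\'elicat, qui constitue selon moi l'obstacle principal, est la troisi\`eme \'etape~: montrer que la filtration ainsi obtenue sur $\pi$ co\"incide bien avec la filtration par le $K_{S}$-socle, c'est-\`a-dire qu'aucun effondrement suppl\'ementaire de couches ne se produit lorsqu'on affaiblit l'\'equivariance de $K$ \`a $K_{S}$. Cela revient \`a v\'erifier que, entre deux constituants irr\'eductibles situ\'es dans des couches non adjacentes, toute $K_{S}$-extension non triviale se rel\`eve d\'ej\`a en une $K$-extension du c\^ot\'e de $\gl{\QQ_p}$~; autrement dit, que le $K_{S}$-socle de chaque quotient successif ne contient pas strictement la restriction du $K$-socle correspondant. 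On esp\`ere ramener cette v\'erification, via la description explicite des gradu\'es donn\'ee dans \cite{stefano} et un calcul direct des espaces $\mathrm{Hom}_{\esel{\FF_p}}$ entre les poids $\sym{r}$, \`a une analyse combinatoire sur les plus hauts poids apparaissant dans les couches successives.
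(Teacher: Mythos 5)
Votre plan reproduit l'architecture de la preuve du papier (rel\`evement \`a $GL_{2}(\QQ_{p})$ via le Th\'eor\`eme \ref{ramla1}, lecture de la filtration par le $K$-socle via le Th\'eor\`eme \ref{stefano}, puis restriction \`a $K_{S}$), et vous identifiez correctement le point crucial : montrer qu'aucune des extensions entre poids de Serre ne se scinde lorsqu'on passe de $K$ \`a $K_{S}$. Mais c'est pr\'ecis\'ement ce point que vous ne d\'emontrez pas --- vous \'ecrivez seulement que vous \emph{esp\'erez} le ramener \`a un calcul d'espaces $\mathrm{Hom}_{SL_{2}(\FF_{p})}$ et \`a une combinatoire de plus hauts poids --- et la m\'ethode esquiss\'ee ne peut pas aboutir telle quelle. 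Au mieux, un tel calcul permet d'invoquer l'inflation-restriction pour conclure que $\mathrm{Ext}^1_{K}(\sigma,\tau)\to\mathrm{Ext}^1_{K_{S}}(\sigma,\tau)$ est injective \emph{lorsque} $\mathrm{Hom}_{K_{S}}(\sigma,\tau)=0$ ; or les couches adjacentes font intervenir des paires du type $\sym{s}$ et $\sym{p-1-s}\otimes{\det}^{s}$, qui deviennent isomorphes apr\`es restriction \`a $K_{S}$ d\`es que $s=\frac{p-1}{2}$ (et de m\^eme pour les extensions $E_{r}$ avec $r=\frac{p+1}{2}$). Dans ces cas le noyau de la restriction contient $H^{1}(K/K_{S},\FFF_{p})\simeq\mathrm{Hom}_{\mathrm{cont}}(\ZZ_{p}^{\times},\FFF_{p})\neq 0$, et aucune consid\'eration de plus hauts poids ne peut d\'ecider si la classe d'extension particuli\`ere fournie par \cite{stefano} survit \`a la restriction.

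Le papier r\`egle ce point par un calcul explicite absent de votre proposition. On raffine d'abord la filtration de \cite{stefano} en r\'ealisant les gradu\'es comme induites de repr\'esentations unis\'erielles de $\kop{}$ (Lemme \ref{lemmeind} et Corollaire \ref{proposition3.3}), de sorte que toutes les extensions \`a contr\^oler proviennent, par le foncteur exact $\ind{\kop{}}{K}{}$, de l'unique $\kop{}$-extension non scind\'ee de $\chi\frak{a}$ par $\chi$ d\'ecrite explicitement par Pa$\check{\text{s}}$k$\bar{\text{u}}$nas (Proposition \ref{pasku}). On en extrait les sous-quotients $E_{r}$, $E_{2,0}$, $E_{2,p-1}$, et le Lemme \ref{lemcrucial} montre qu'ils sont d\'ej\`a unis\'eriels en restriction au sous-groupe $U(\ZZ_{p})\subset K_{S}$ des matrices unipotentes sup\'erieures, gr\^ace \`a l'action explicite sur les vecteurs $F_{p-1}(e_{0})$ et $F_{0}(e_{1})$ et \`a la formule d'addition des rel\`evements de Teichm\"uller \eqref{witt} ; c'est le coefficient dominant $\pm\mu$ du polyn\^ome $P$ qui force la non-scission. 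Notez enfin que, pour le cas supersingulier, votre premi\`ere \'etape suppose implicitement que le facteur direct $\mathrm{\Pi}_{r}$ de $\pi(r,0,1)\vert_{KZ}$ correspond bien \`a $\pi_{r}$ : cela demande une v\'erification (Lemme \ref{lemme2}, via la stabilit\'e de $\mathrm{\Pi}_{r}$ sous $\alpha_{0}=\mathrm{diag}(p^{-1},p)$ et la d\'ecomposition de Cartan).
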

\noindent  Cet article est organis\'e de la fa\c{c}on suivante : on commence par rappeler les r\'esultats de \cite{ramla} et \cite{stefano} ayant inspir\'e l'\'etude men\'ee ici, et qui traitent respectivement des repr\'esentations modulo $p$ de $SL_{2}(\QQ_{p})$ et de la structure interne des repr\'esentations modulo $p$ de $GL_{2}(\QQ_{p})$. On d\'emontre ensuite deux r\'esultats techniques concernant le comportement des $GL_{2}(\ZZ_{p})$-extensions apr\`es restriction \`a $SL_{2}(\ZZ_{p})$, dont nous d\'eduisons finalement une preuve simple du Th\'eor\`eme \ref{thmppaldelanote}.

\section{Pr\'eliminaires}
\label{RappelsdeNous}
\subsection{Notations}
\label{Preliminairesnotations}
On fixe un nombre premier $p$. On note $G := GL_{2}(\QQ_{p})$ de sous-groupe ouvert compact maximal hypersp\'ecial $K := GL_{2}(\ZZ_{p})$ et de centre $Z \simeq \QQ^{\times}_{p}$ dont le pro-$p$-sous-groupe maximal est not\'e $Z_{1}$. On d\'esigne par $B$ le sous-groupe de Borel form\'e des matrices triangulaires sup\'erieures de $G$ et par $U$ son radical unipotent. L'application de r\'eduction modulo $p$ induit naturellement une surjection de $K$ sur $GL_{2}(\FF_{p})$ qui nous permet de d\'efinir le sous-groupe d'Iwahori standard $I$ (resp. : son pro-$p$-radical $I(1)$) comme l'image r\'eciproque par cette surjection du sous-groupe des matrices triangulaires sup\'erieures (resp. : et unipotentes) de $GL_{2}(\FF_{p})$. Plus g\'en\'eralement, pour tout entier $n \geq 1$, on d\'efinit $K_{0}(p^{n})$ comme le sous-groupe des \'el\'ements de $K$ s'\'ecrivant sous la forme $\left(\begin{array}{cc} a & b \\ p^{n}c & d\end{array}\right)$ avec $a, b, c, d \in \ZZ_{p}$.\\

\noindent De m\^eme, on note $G_{S} := SL_{2}(\QQ_{p})$ de sous-groupe ouvert compact maximal hypersp\'ecial standard $K_{S} = SL_{2}(\ZZ_{p})$, de sous-groupe de Borel $B_{S} := G_{S} \cap B$, de sous-groupe d'Iwahori standard $I_{S} = G_{S} \cap I$ et de pro-$p$-Iwahori standard $I_{S}(1) = G_{S} \cap I(1)$.\\

\noindent Comme nous l'avons rappel\'e dans l'introduction, on d\'esigne par $\omega$ le $\FFF_{p}$-caract\`ere lisse de $\QQ_{p}^{\times}$ qui est trivial sur $p$ et dont l'action sur $\ZZ_{p}^{\times}$ est d\'efinie par l'application de r\'eduction modulo $p$. Pour tout coefficient $\lambda \in \FFF_{p}$ non nul, on note $\mu_{\lambda} : \QQ^{\times}_{p} \to \FFF^{\times}_{p}$ le caract\`ere lisse non ramifi\'e qui envoie $p$ sur $\lambda$, et pour tout entier $n \in \ZZ$, on d\'esigne par $\chi^{s}_{n}$ le $\FFF_{p}$-caract\`ere du groupe fini $B(\FF_{p})$ d\'efini par $\chi^{s}_{n}\left(\left(\begin{array}{cc} a & b \\ 0 & d\end{array}\right)\right) := d^{n}$. On note enfin $\frak{a}$ le $\FFF_{p}$-caract\`ere de $B(\FF_{p})$ d\'efini par $\frak{a}\left(\left(\begin{array}{cc} a & b \\ 0 & d\end{array}\right)\right) := ad^{-1}$.
\begin{remark}
Les caract\`eres $\chi^{s}_{n}$ et $\frak{a}$ d\'efinissent par restriction des caract\`eres du groupe fini $B_{S}(\FF_{p}) = B(\FF_{p}) \cap SL_{2}(\FF_{p})$, qui d\'efinissent par inflation des $\FFF_{p}$-caract\`eres lisses de $I_{S}$ que l'on note encore $\chi^{s}_{n}$ et $\frak{a}$. Notons qu'ils satisfont d'une part aux relations suivantes :
\begin{equation*}
\forall \ M \in B_{S}(\FF_{p}), \ \forall \ n \in \ZZ, \ \chi^{s}_{n}(M) = \chi^{s}_{-n}(M^{-1}) \ ,
\end{equation*}
et que l'on a d'autre part $\chi_{2}^{s}\frak{a} = \det$.
\end{remark}

Pour tout sous-groupe ouvert $H$ de $G$, on dispose d'un foncteur d'induction compacte $\mathrm{ind}_{H}^{G}$ qui co$\ddot{\text{\i}}$ncide avec le foncteur d'induction lisse $\ind{H}{G}$ lorsque $H$ est d'indice fini dans $G$. Pour toute $\FFF_{p}$-repr\'esentation lisse $\sigma$ de $H$ et tout vecteur $v \in \sigma$, on note $[1,v]$ l'\'el\'ement de $\mathrm{ind}_{H}^{G}(\sigma)$ de support \'egal \`a $H$ et qui envoie $I_{2}$ sur $v$.\\
Ceci reste vrai si l'on remplace $G$ par $K$ et que l'on consid\`ere le sous-groupe d'indice fini $H = \kop{}$ de $K$. Dans ce cas, si $\sigma$ est une $\FFF_{p}$-repr\'esentation lisse de $\kop{}$ et si $v$ est un \'el\'ement de $\sigma$, on pose alors, pour tout $\ell \in \{0, \ldots , p-1\}$,
\begin{equation}
\label{Fellv}
F_{\ell}(v) := \underset{\lambda_0\in\FF_{p}}{\sum}\lambda_0^{\ell}\maq{[\lambda_0]}{1}{1}{0}\stand{1}{v}\in\ind{\kop{}}{K}{\sigma} \ .
\end{equation}
Rappelons par ailleurs que toute repr\'esentation lisse irr\'eductible de $K_{S}$ sur $\FFF_{p}$ provient par inflation d'une repr\'esentation irr\'eductible $\sym{r}$ de $SL_{2}(\FF_{p})$, et que toute repr\'esentation lisse irr\'eductible de $K$ sur $\FFF_{p}$ provient par inflation d'une repr\'esentation irr\'eductible $\sym{r} \otimes \det^{m}$ de $GL_{2}(\FF_{p})$, avec $r \in \{0, \ldots, p-1\}$ et $m \in \{0, \ldots , p-2\}$ uniques. Par abus de notation, nous noterons encore $\sym{r}$ (resp. $\sym{r} \otimes \det$) la repr\'esentation de $K_{S}$ (resp. de $K$) correspondante.

\noindent Enfin, si $\tau$ est une repr\'esentation lisse de $K_{S}$ sur $\FFF_{p}$, on note $\big\{\text{soc}^i(\tau)\big\}_{i\in\NN}$ sa filtration par le socle, que l'on d\'efinit comme suit : $\text{soc}^0(\tau)$ est le $K_{S}$-socle de $\tau$, i.e. le sous-$\FFF_{p}[K_{S}]$-module semi-simple maximal contenu dans $\tau$. En supposant avoir construit $\text{soc}^i(\tau)$, on d\'efinit $\text{soc}^{i+1}(\tau)$ comme l'image r\'eciproque de $\text{soc}^0\big( \tau/\big(\text{soc}^i(\tau)\big) \big)$ par la projection canonique $\tau\twoheadrightarrow \tau/\big(\text{soc}^i(\tau)\big)$.
On utilisera alors la notation
\begin{eqnarray*}
\mathrm{soc}^1(\tau)\textbf{---}\mathrm{soc}^1(\tau)/\mathrm{soc}^0(\tau)\textbf{---}\dots\textbf{---}
\mathrm{soc}^{n+1}(\tau)/\mathrm{soc}^n(\tau)
\textbf{---}\dots
\end{eqnarray*}
pour d\'esigner la suite des facteurs gradu\'es successifs de la filtration par le socle de $\tau$.

\noindent Plus g\'en\'eralement, si $\tau$ est une $\FFF_{p}$-repr\'esentation lisse de $K_{S}$ munie d'une filtation croissante $\big\{\tau\big\}_{i\in\N}$, nous \'ecrirons
\begin{eqnarray*}
\text{SocFil}(\tau_0)\textbf{---}\text{SocFil}(\tau_1/\tau_0)\textbf{---}\dots\textbf{---}
\text{SocFil}(\tau_{i+1}/\tau_i)
\textbf{---}\dots
\end{eqnarray*}
pour signifier que :
\begin{itemize}
 \item [$i)$] la filtration par le socle de $\tau$ est induite par raffinemment par la filtration par le socle de chaque facteur gradu\'e $\tau_{i+1}/\tau_i$ ;
 \item[$ii)$] la suite des facteurs gradu\'es de la filtration par le socle de $\tau$ s'obtient en juxtaposant les suites de facteurs gradu\'es associ\'es aux filtrations par le socle de chaque $\tau_{i+1}/\tau_i$.
\end{itemize}

\subsection{Repr\'esentations modulo $p$ de $SL_{2}(\QQ_{p})$}
\label{RappelsRamla}
Les travaux pr\'esent\'es dans \cite{ramla} fournissent sans hypoth\`ese sur $p$ une classification exhaustive des classes d'isomorphisme des repr\'esentations lisses irr\'eductibles de $SL_{2}(\QQ_{p})$ sur $\FFF_{p}$, ainsi qu'une description de la restriction \`a $SL_{2}(\QQ_{p})$ des repr\'esentations lisses irr\'eductibles de $GL_{2}(\QQ_{p})$ sur $\FFF_{p}$. Ils permettent notamment d'obtenir l'\'enonc\'e suivant \cite[Th\'eor\`emes 0.1, 2.16 et Proposition 4.5]{ramla}.
\begin{theorem}
\label{ramla1}
Soit $\lambda \in \FFF^{\times}_{p}$ et soit $r \in \{0, \ldots , p-1\}$.
\begin{enumerate}
\item La restriction \`a $G_{S}$ d\'efinit un isomorphisme de $\FFF_{p}[G_{S}]$-modules :
$$ \mathrm{Res}_{G_{S}}^{G}(\ind{B}{G}(\mu_{\lambda} \otimes \mu_{\lambda^{-1}} \omega^{r})) \simeq \ind{B_{S}}{G_{S}}(\mu_{\lambda^{2}}\omega^{p-1-r}) \ .$$
\item Le $\FFF_{p}[G_{S}]$-module $\text{Ind}_{B_{S}}^{G_{S}}(\mu_{\lambda^{2}}\omega^{p-1-r})$ est r\'eductible si et seulement si le caract\`ere $\mu_{\lambda^{2}}\omega^{p-1-r}$ est trivial. L'induite parabolique $\ind{B_{S}}{G_{S}}(\1)$ est un $\FFF_{p}[G_{S}]$-module ind\'ecomposable de longueur $2$ admettant le caract\`ere trivial comme sous-objet et la repr\'esentation de Steinberg $St_{S}$ comme quotient.
\item La restriction \`a $G_{S}$ \'etablit un isomorphisme de $\FFF_{p}[G_{S}]$-modules de la repr\'esentation de Steinberg $St$ de $G$ vers la repr\'esentation de Steinberg $St_{S}$.
\item La restriction \`a $G_{S}$ induit un isomorphisme de $\FFF_{p}[G_{S}]$-modules :
\begin{equation}
\label{scindageramladessg}
\mathrm{Res}_{G_{S}}^{G}(\pi(r,0,1)) \simeq \pi_{r} \oplus \pi_{p-1-r} \ .
\end{equation}
\end{enumerate}
\end{theorem}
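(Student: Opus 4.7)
Je traiterais les quatre assertions successivement. Pour \textbf{(1)}, le point-cl\'e est la d\'ecomposition $G = B \cdot G_{S}$, cons\'equence de la surjectivit\'e de $\det : B \to \QQ_{p}^{\times}$, qui identifie $G/B$ \`a $G_{S}/B_{S}$ comme $G_{S}$-ensembles. La formule de Mackey donne alors imm\'ediatement $\mathrm{Res}_{G_{S}}^{G}(\ind{B}{G}(\chi)) \simeq \ind{B_{S}}{G_{S}}(\chi|_{B_{S}})$ pour tout caract\`ere $\chi$ de $B$. Un calcul direct en \'evaluant sur $\mathrm{diag}(d^{-1},d) \in B_{S}$ montre que la restriction du caract\`ere $\mu_{\lambda} \otimes \mu_{\lambda^{-1}}\omega^{r}$ vaut $\mu_{\lambda^{-2}}\omega^{r}$ ; on conclut en invoquant l'entrelacement non nul entre cette induite et celle du conjugu\'e par l'\'el\'ement de Weyl de $SL_{2}$, qui fournit l'expression $\mu_{\lambda^{2}}\omega^{p-1-r}$ annonc\'ee.

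Pour \textbf{(2)}, on adapte les techniques classiques sur les s\'eries principales mod $p$ : on calcule le $K_{S}$-socle et le $K_{S}$-cosocle de $\ind{B_{S}}{G_{S}}(\chi)$ via la d\'ecomposition d'Iwahori et un contr\^ole des espaces d'invariants par $I_{S}(1)$, et on v\'erifie qu'ils \'epuisent la repr\'esentation sauf si $\chi = \1$. Dans ce cas d\'eg\'en\'er\'e, la sous-repr\'esentation form\'ee des fonctions constantes, isomorphe \`a $\1$, a pour quotient la repr\'esentation de Steinberg $St_{S}$ par d\'efinition ; le non-scindage de la suite exacte r\'esulte du calcul explicite des $I_{S}(1)$-invariants. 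Le point \textbf{(3)} est alors une cons\'equence formelle : l'iso de restriction de (1) envoie, par fonctorialit\'e, le sous-objet trivial de $\ind{B}{G}(\1)$ sur celui de $\ind{B_{S}}{G_{S}}(\1)$, et le passage au quotient fournit $\mathrm{Res}_{G_{S}}^{G}(St) \simeq St_{S}$.

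Pour \textbf{(4)}, on utilise la pr\'esentation de $\pi(r,0,1)$ (resp.\ de $\pi_{r}$ et $\pi_{p-1-r}$) comme conoyau de l'op\'erateur de Hecke sph\'erique agissant sur $\mathrm{ind}_{KZ}^{G}(\sym{r})$ (resp.\ sur $\mathrm{ind}_{K_{S}}^{G_{S}}(\sym{r})$ et $\mathrm{ind}_{K_{S}}^{G_{S}}(\sym{p-1-r})$). L'observation cruciale est que $[G : G_{S} \cdot KZ] = 2$, avec comme repr\'esentant non trivial la matrice $\mathrm{diag}(p,1)$, dont le d\'eterminant $p$ n'est pas un carr\'e dans $\QQ_{p}^{\times}$. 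La restriction \`a $G_{S}$ de $\mathrm{ind}_{KZ}^{G}(\sym{r})$ se d\'ecompose alors en deux induites compactes sur $G_{S}$, index\'ees par $G_{S}\backslash G/KZ$ : la premi\`ere redonne $\mathrm{ind}_{K_{S}}^{G_{S}}(\sym{r})$, et la seconde s'identifie, apr\`es conjugaison par $\mathrm{diag}(p,1)$, \`a $\mathrm{ind}_{K_{S}}^{G_{S}}(\sym{p-1-r})$, le passage du poids $r$ au poids $p-1-r$ refl\'etant l'action de la conjugaison sur les poids de Serre. La compatibilit\'e avec l'op\'erateur de Hecke (garantie par l'annulation du param\`etre $\lambda = 0$) transporte la relation d\'efinissant $\pi(r,0,1)$ en celles d\'efinissant $\pi_{r}$ et $\pi_{p-1-r}$ s\'epar\'ement, d'o\`u la d\'ecomposition \eqref{scindageramladessg}. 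L'obstacle principal est pr\'ecis\'ement cette identification du poids $p-1-r$ sur le second facteur, qui demande un contr\^ole explicite de l'action de $K_{S}$ apr\`es conjugaison.
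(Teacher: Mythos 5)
Le texte de l'article ne contient pas de d\'emonstration de cet \'enonc\'e : il est cit\'e tel quel de \cite[Th\'eor\`emes 0.1, 2.16 et Proposition 4.5]{ramla}. Je ne peux donc comparer votre argument qu'\`a la strat\'egie de \emph{loc. cit.}, dont il reprend l'architecture g\'en\'erale (d\'ecomposition $G = B G_{S}$ et formule de Mackey pour (1), analyse des $I_{S}(1)$-invariants pour (2), fonctorialit\'e pour (3), d\'ecoupage de $\mathrm{ind}_{KZ}^{G}(\sym{r})\vert_{G_{S}}$ suivant les deux $G_{S}$-orbites de sommets de l'arbre pour (4)). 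Deux \'etapes pr\'ecises de votre r\'edaction sont cependant fausses ou injustifi\'ees. Dans (1), l'appel \`a un entrelacement non nul avec la conjugu\'ee de Weyl est \`a la fois inutile et ill\'egitime en caract\'eristique $p$ : pour $GL_{2}(\QQ_{p})$, les induites $\ind{B}{G}(\chi_{1}\otimes\chi_{2})$ et $\ind{B}{G}(\chi_{2}\otimes\chi_{1})$ sont g\'en\'eriquement irr\'eductibles et non isomorphes, de sorte qu'aucun entrelacement non nul n'existe entre elles, et rien ne garantit a priori l'\'enonc\'e analogue pour $SL_{2}$. En r\'ealit\'e aucun entrelacement n'est n\'ecessaire : sur $B_{S}$ on a $a = d^{-1}$, donc le caract\`ere $d\mapsto\mu_{\lambda^{-2}}\omega^{r}(d)$ que vous obtenez n'est autre que $a\mapsto\mu_{\lambda^{2}}\omega^{-r}(a) = \mu_{\lambda^{2}}\omega^{p-1-r}(a)$ ; la divergence apparente est une pure question de coordonn\'ee sur le tore.

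Dans (4), le point d\'elicat est pr\'ecis\'ement celui que votre argument escamote. D'une part, l'op\'erateur $T_{r}$ ne pr\'eserve pas les deux facteurs de Mackey $A_{0}$ et $A_{1}$ (index\'es par la parit\'e de la valuation du d\'eterminant) : comme il d\'ecale cette valuation de $1$, il les \'echange. Le conoyau se scinde bien en $A_{0}/T_{r}(A_{1})\oplus A_{1}/T_{r}(A_{0})$, mais identifier chacun de ces quotients au conoyau d'un op\'erateur de Hecke sph\'erique de $G_{S}$ demande un vrai travail (c'est le contenu de la Proposition 4.5 de \cite{ramla}), et non la simple compatibilit\'e que vous invoquez. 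D'autre part, le second facteur est induit depuis $K_{S}^{\alpha} = \alpha K_{S}\alpha^{-1}$ avec $\alpha = \mathrm{diag}(p,1)$, qui n'est pas conjugu\'e \`a $K_{S}$ dans $G_{S}$ : la conjugaison par $\alpha$ ne le ram\`ene pas \`a une induite depuis $K_{S}$ et ne transforme en aucun cas le poids $\sym{r}$ en $\sym{p-1-r}$. L'apparition du poids $p-1-r$ provient de l'isomorphisme $\pi(r,0,1)\simeq\pi(p-1-r,0,1)\otimes(\omega^{r}\circ\det)$ de \cite{breuil}, dont la torsion par le d\'eterminant dispara\^it en restriction \`a $G_{S}$ (ou, de mani\`ere \'equivalente, de la structure du $K$-socle rappel\'ee au Th\'eor\`eme \ref{stefano}) ; sans cet ingr\'edient, votre preuve de \eqref{scindageramladessg} ne peut pas aboutir.
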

\noindent Dans cet \'enonc\'e, $\pi(r,0,1) := \textrm{Coker}(T_{r} : \mathrm{ind}_{KZ}^{G}(\sym{r}) \to \mathrm{ind}_{KZ}^{G}(\sym{r}))$ est la repr\'esentation supersinguli\`ere de $GL_{2}(\QQ_{p})$ de param\`etre $r$ \cite[Section 2.7]{breuil}. On d\'esigne par $\bar{f}$ l'image dans $\pi(r,0,1)$ d'un \'el\'ement $f \in \mathrm{ind}_{KZ}^{G}(\sym{r})$. Les repr\'esentations $\pi_{0}, \ldots , \pi_{p-1}$ forment quant \`a elles un syst\`eme explicite de repr\'esentants des classes d'isomorphisme des repr\'esentations supersinguli\`eres de $G_{S}$ \cite[Section 4.1]{ramla} qui v\'erifient en particulier les assertions suivantes \cite[Propositions 4.7 et 4.11]{ramla}.
\begin{theorem}
\label{ramla2}
Soit $r \in \{0, \ldots , p-1\}$. 
\begin{enumerate}
\item L'espace $\pi^{I_{S}(1)}_{r}$ des vecteurs $I_{S}(1)$-invariants de la repr\'esentation supersinguli\`ere $\pi_{r}$ est de dimension $1$ sur $\FFF_{p}$ et engendr\'e par $v_{r} := \overline{[I_{2}, x^{r}]}$.
\item Le sous-groupe d'Iwahori standard $I_{S}$ agit sur $\pi^{I_{S}(1)}_{r}$ par le caract\`ere $\omega^{r}$.
\end{enumerate}
\end{theorem}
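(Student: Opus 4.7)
L'approche consiste à déduire les deux assertions de la structure de la représentation supersingulière $\pi(r,0,1)$ de $GL_2(\QQ_p)$, en exploitant la décomposition \eqref{scindageramladessg} fournie par le Théorème \ref{ramla1}(4). On commence par observer que, $p$ étant impair, l'application $x \mapsto x^2$ est bijective sur $1 + p\ZZ_p$ ; pour tout $g \in I(1)$ il existe donc $s \in 1 + p\ZZ_p$ tel que $\det g = s^2$, et la factorisation $g = (sI_2)(s^{-1}g)$ montre que $I(1) = Z_1 \cdot I_S(1)$. Comme $Z_1$ agit trivialement sur $\pi(r,0,1)$ (son caractère central étant trivial sur le pro-$p$-centre), les espaces d'invariants $\pi(r,0,1)^{I_S(1)}$ et $\pi(r,0,1)^{I(1)}$ coïncident.

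J'utiliserais ensuite la description classique (cf.\ \cite{breuil}) selon laquelle $\pi(r,0,1)^{I(1)}$ est de dimension $2$ sur $\FFF_p$, engendré par $\overline{[I_2, x^r]}$ et $\overline{[\alpha, y^r]}$ pour un certain $\alpha \in G$ représentant le sommet voisin de $K$ dans l'arbre de Bruhat-Tits. L'étape cruciale consiste à répartir ces deux vecteurs entre les facteurs $\pi_r$ et $\pi_{p-1-r}$ de \eqref{scindageramladessg}. Par la construction de $\pi_r$ rappelée dans \cite[Section 4.1]{ramla}, le vecteur $v_r := \overline{[I_2, x^r]}$ appartient au sous-$G_S$-module $\pi_r \subset \pi(r,0,1)$. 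Les deux supersinguliers $\pi_r$ et $\pi_{p-1-r}$ n'étant pas isomorphes (leurs $K_S$-socles sont respectivement $\sym{r}$ et $\sym{p-1-r}$, qui sont distincts), le second vecteur de base se retrouve nécessairement dans $\pi_{p-1-r}$, ce qui établit (1).

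Pour (2), puisque $\pi_r^{I_S(1)}$ est maintenant de dimension $1$, l'action de $I_S$ s'y factorise via un caractère $\eta$ du quotient $I_S/I_S(1) \simeq \FF_p^\times$. Il suffit de l'évaluer sur $t := \maq{a}{0}{0}{a^{-1}} \in I_S$ pour $a \in \ZZ_p^\times$. Un calcul direct dans $\mathrm{ind}_{KZ}^G(\sym{r})$ (en notant que $t \in K$) donne $t \cdot [I_2, x^r] = [I_2, t \cdot x^r] = a^r \, [I_2, x^r]$, puisque $t$ agit sur le vecteur de plus haut poids $x^r \in \sym{r}$ par multiplication par $a^r$. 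En passant au quotient on obtient $t \cdot v_r = \omega^r(t) \, v_r$, ce qui achève la preuve.

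La principale difficulté de ce plan réside dans la deuxième étape, à savoir la répartition correcte des deux vecteurs $I(1)$-invariants de $\pi(r,0,1)$ entre $\pi_r$ et $\pi_{p-1-r}$. Elle s'appuie d'une part sur la non-isomorphie de ces deux facteurs supersinguliers, détectable via leurs $K_S$-socles distincts, et d'autre part sur l'identification précise de $\pi_r$ comme sous-$G_S$-module de $\pi(r,0,1)$ engendré par $v_r$, telle que fournie par les constructions explicites de \cite{ramla}. Les autres étapes (réduction $I(1) \leftrightarrow I_S(1)$ et calcul du caractère sur la droite d'invariants) sont de nature plus directe.
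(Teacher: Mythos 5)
First, a structural remark: the paper does not prove Th\'eor\`eme \ref{ramla2} at all --- it is recalled verbatim from \cite[Propositions 4.7 et 4.11]{ramla} --- so there is no internal proof to compare against. Your strategy (reduce $I_{S}(1)$-invariants to $I(1)$-invariants via $I(1)=Z_{1}\cdot I_{S}(1)$, which is valid since $p$ is odd and $Z_{1}$ acts trivially; invoke Breuil's computation that $\pi(r,0,1)^{I(1)}$ is two-dimensional; then split this space across the d\'ecomposition \eqref{scindageramladessg}) is a perfectly reasonable independent route, and both the reduction step and the character computation in (2) are correct.

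The weak point is the sentence that distributes the two invariant vectors between the summands. You argue that since $\pi_{r}$ and $\pi_{p-1-r}$ are not isomorphic, the second basis vector ``se retrouve n\'ecessairement dans $\pi_{p-1-r}$''. This fails on two counts. First, when $r=(p-1)/2$ one has $p-1-r=r$: the two summands are isomorphic and their $K_{S}$-socles coincide, so your premise is simply false in that case. Second, even when the summands are non-isomorphic, non-isomorphy says nothing about where a particular $I_{S}(1)$-invariant vector of the ambient representation sits: a priori $\overline{[\alpha,y^{r}]}$ could have a nonzero component in $\pi_{r}$, and then $\pi_{r}^{I_{S}(1)}$ would be two-dimensional. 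What you actually need is a dimension count: $I_{S}(1)$ is pro-$p$ and $\pi_{p-1-r}$ is a nonzero smooth $\FFF_{p}$-repr\'esentation, hence $\pi_{p-1-r}^{I_{S}(1)}\neq 0$; combined with $\dim\pi(r,0,1)^{I_{S}(1)}=2$ and $v_{r}\in\pi_{r}^{I_{S}(1)}\setminus\{0\}$, this forces $\dim\pi_{r}^{I_{S}(1)}=\dim\pi_{p-1-r}^{I_{S}(1)}=1$. With that substitution the argument goes through; as written, the key step of (1) is not justified.
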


\subsection{Structure interne des repr\'esentations modulo $p$ de $GL_{2}(\QQ_{p})$}
\label{RappelsStefano} 
On suppose d\'esormais $p \geq 5$. Nous rappelons maintenant quelques r\'esultats de \cite{stefano} concernant la structure interne des repr\'esentations lisses irr\'eductibles de $GL_{2}(\QQ_{p})$ sur $\FFF_{p}$, qui d\'ecrivent notamment la filtration par le $K$-socle des $\FFF_{p}$-repr\'esentations lisses irr\'eductibles de dimension infinie de $GL_{2}(\QQ_{p})$ \cite[Theorems 1.1 and 1.2]{stefano}.
\begin{theorem}
\label{stefano}
Soit $(r, \lambda) \in \{0, \ldots , p-1\} \times \FFF^{\times}_{p}$ une paire de param\`etres.
\begin{enumerate}
\item La repr\'esentation $\ind{B}{G}(\mu_{\lambda} \otimes \mu_{\lambda^{-1}}\omega^{r})$ admet la filtration par le $K$-socle suivante :
	\begin{eqnarray*}
	\mathrm{SocFil}(\ind{I}{K}(\chi_{r}^{s}))\textbf{---}\mathrm{SocFil}(\ind{I}{K}(\chi_{r}^{s}\mathfrak{a}))\textbf{---}
	\mathrm{SocFil}(\ind{I}{K}(\chi_{r}^{s}\mathfrak{a}^2))\textbf{---}\dots \ .
	\end{eqnarray*}

\item La repr\'esentation de Steinberg $St$ de $GL_{2}(\QQ_{p})$ admet la filtration par le $K$-socle suivante : 
	\begin{eqnarray*}
	\sym{p-1}\textbf{---}\mathrm{SocFil}(\ind{I}{K}(\frak{a}))\textbf{---}\mathrm{SocFil}(\ind{I}{K}(\frak{a}^{2}))\textbf{---} 
	\mathrm{SocFil}(\ind{I}{K}(\frak{a}^{3}))\textbf{---} \dots \ .
	\end{eqnarray*}
\item La restriction \`a $KZ$ de la repr\'esentation supersinguli\`ere $\pi(r,0,1)$ est scind\'ee de longueur $2$ : 
\begin{equation}
\label{scindagestefanodessg}
\pi(r,0,1)\vert_{KZ} \simeq \mathrm{\Pi}_{r} \oplus \mathrm{\Pi}_{p-1-r} \ .
\end{equation}
Les repr\'esentations $\mathrm{\Pi}_{r}$ et $\mathrm{\Pi}_{p-1-r}$ admettent respectivement les filtrations par le $K$-socle suivantes :
	\begin{eqnarray*}
	\sym{r}\textbf{---}\mathrm{SocFil}(\ind{I}{K}(\chi_{r}^{s}\mathfrak{a}^{r+1}))\textbf{---}
	\mathrm{SocFil}(\ind{I}{K}(\chi_{r}^{s}\mathfrak{a}^{r+2}))
	\textbf{---}\mathrm{SocFil}(\ind{I}{K}(\chi_{r}^{s}\mathfrak{a}^{r+3}))\textbf{---}\dots
	\end{eqnarray*}
	et
	\begin{eqnarray*}
	\sym{p-1-r}\textbf{---}\mathrm{SocFil}(\ind{I}{K}(\chi_{r}^{s}\mathfrak{a}))\textbf{---}
	\mathrm{SocFil}(\ind{I}{K}(\chi_{r}^{s}\mathfrak{a}^{2}))
	\textbf{---}\mathrm{SocFil}(\ind{I}{K}(\chi_{r}^{s}\mathfrak{a}^{3}))\textbf{---}\dots \ .
	\end{eqnarray*}
\end{enumerate}
\end{theorem}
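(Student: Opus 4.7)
La stratégie générale repose sur la construction, pour chaque représentation $\pi$ apparaissant dans l'énoncé, d'une filtration croissante $K$-équivariante $(R_n(\pi))_{n \in \NN}$ dont les gradués successifs $R_n/R_{n-1}$ sont isomorphes à des induites paraboliques $\ind{I}{K}(\chi_n)$ pour des caractères $\chi_n$ explicites. Géométriquement, cette filtration provient de la stratification de $KZ \backslash G$ par les boules de rayon $n$ autour du sommet distingué $[KZ]$ dans l'arbre de Bruhat-Tits de $PGL_2(\QQ_p)$, le groupe $K$ agissant naturellement sur chaque niveau. L'énoncé résultera alors, d'une part, du calcul préalable de la filtration par le $K$-socle de chaque $\ind{I}{K}(\chi_n)$, effectué via la théorie des représentations de $GL_2(\FF_p)$ en caractéristique $p$ (et dont le cas $\chi_n = \1$ fournit le dévissage classique $0 \to \1 \to \ind{I}{K}(\1) \to \sym{p-1} \to 0$), et d'autre part, d'une analyse fine du recollement entre strates successives.

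Pour la partie (1), la décomposition d'Iwasawa $G = BK$ identifie $\ind{B}{G}(\chi)\vert_K$ à $\ind{B \cap K}{K}(\chi\vert_{B \cap K})$, et la décomposition de Bruhat y introduit une filtration indexée par la valuation $p$-adique du coefficient matriciel de la grosse cellule. Un calcul explicite exploitant les représentants $\trinfdue{p^{n}[\lambda]}$ identifie alors le gradué de rang $n$ à $\ind{I}{K}(\chi_r^s \mathfrak{a}^n)$. La partie (2) s'en déduit directement : la Steinberg étant définie comme $\ind{B}{G}(\1)/\1$, la filtration précédente passe au quotient, le premier gradué $\ind{I}{K}(\1)$ se réduisant à $\ind{I}{K}(\1)/\1 \simeq \sym{p-1}$, les gradués suivants restant inchangés.

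Pour la partie (3), plus délicate, je partirais de la présentation de Barthel-Livné $\pi(r,0,1) = \mathrm{ind}_{KZ}^G(\sym{r})/\mathrm{Im}(T_r)$ et définirais $R_n$ comme l'image dans $\pi(r,0,1)$ du sous-espace des fonctions supportées sur la boule $B(n) \subset KZ \backslash G$. On a $R_0 \simeq \sym{r}$, et pour $n \geq 1$, l'étude explicite de la sphère $S(n) := B(n) \setminus B(n-1)$ via les représentants $\trinfdue{p^{n}[\lambda_0 + p \lambda_1 + \dots + p^{n-1}\lambda_{n-1}]}$ identifie $R_n/R_{n-1}$ à $\ind{I}{K}(\chi_n)$ pour un caractère $\chi_n$ à déterminer. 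L'opérateur $T_r$ reliant les strates $R_{n-1}$, $R_n$ et $R_{n+1}$ par des relations de récurrence explicites, son exploitation, conjointement avec le scindage $KZ$-équivariant (\ref{scindagestefanodessg}) (qui s'obtient via l'action de l'involution $\maq{0}{1}{p}{0}$), permet de séparer les contributions de $\mathrm{\Pi}_r$ et $\mathrm{\Pi}_{p-1-r}$ et d'identifier précisément les caractères $\chi_n$ aux valeurs annoncées.

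L'obstacle principal et véritablement technique consiste à vérifier que la filtration $(R_n)$ constitue effectivement un raffinement de la filtration par le $K$-socle, c'est-à-dire qu'aucune extension entre couches non adjacentes n'intervient et que le recollement des couches adjacentes respecte l'ordre dicté par les socles individuels. Ceci requiert une étude systématique des $\mathrm{Ext}^1_K$ entre poids de Serre, classés par Breuil-Pa$\check{\text{s}}$k$\bar{\text{u}}$nas \cite{BP}, et la démonstration qu'une extension \emph{transverse} aux strates forcerait l'existence d'invariants par $I(1)$ incompatibles avec la description directe de $\pi^{I(1)}$. C'est à ce stade qu'intervient de façon essentielle l'hypothèse $p \geq 5$, les phénomènes exceptionnels en petite caractéristique engendrant des extensions parasites qui brouilleraient la structure énoncée.
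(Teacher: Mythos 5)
Premi\`ere remarque : cet \'enonc\'e n'est pas d\'emontr\'e dans l'article, qui le cite tel quel depuis \cite[Theorems 1.1 and 1.2]{stefano} ; la Section \ref{main} n'en rappelle que les ingr\'edients. Ta proposition doit donc \^etre compar\'ee \`a la d\'emonstration originale de \cite{stefano} (et \cite{stefano1}), dont elle reconstitue correctement l'architecture : pour (1), identification $K$-\'equivariante de $\ind{B}{G}(\chi)\vert_K$ avec $\ind{B\cap K}{K}(\chi^s_r) = \indlim{n}\ind{\kop{n+1}}{K}(\chi^s_r)$ puis filtration dont les gradu\'es font appara\^itre les $\ind{I}{K}(\chi^s_r\frak{a}^n)$ ; pour (2), passage au quotient par le caract\`ere trivial ; pour (3), filtration par les boules de l'arbre, sommes amalgam\'ees $R_0\oplus_{R_1}\dots\oplus_{R_n}R_{n+1}$ et scindage suivant la parit\'e de la distance au sommet standard (l'\'el\'ement $\maq{0}{1}{p}{0}$ \'echangeant bien les deux facteurs, il ne \emph{produit} pas le scindage mais relie $\mathrm{\Pi}_r$ et $\mathrm{\Pi}_{p-1-r}$). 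Deux impr\'ecisions tout de m\^eme : le gradu\'e de rang $n$ de la filtration par les boules n'est pas une seule induite $\ind{I}{K}(\chi_n)$ (sa dimension cro\^it avec $n$), mais l'induite $\ind{\kop{}}{K}$ d'une repr\'esentation \emph{unis\'erielle} de l'Iwahori dont les facteurs sont des caract\`eres --- c'est pr\'ecis\'ement le contenu du Lemme \ref{lemmeind} et du Corollaire \ref{proposition3.3} ; et la filtration de (1) est index\'ee par le niveau de constance locale des fonctions sur $\PP^1(\ZZ_p)$ plut\^ot que par la d\'ecomposition de Bruhat.

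La lacune v\'eritable est celle que tu signales toi-m\^eme comme obstacle principal : montrer que cette stratification raffine la filtration par le $K$-socle, c'est-\`a-dire (i) que chaque $\ind{I}{K}(\chi)$ de longueur $2$ est non scind\'ee, (ii) que les recollements entre couches cons\'ecutives sont non scind\'es, et (iii) qu'aucun poids de Serre d'une couche profonde ne remonte dans un socle ant\'erieur. Tu renvoies pour cela \`a un calcul de $\mathrm{Ext}^1_K$ entre poids de Serre et \`a un argument d'invariants sous $I(1)$, mais sans les mener ; or c'est l\`a tout le contenu du th\'eor\`eme. Le point (iii) en particulier ne d\'ecoule pas formellement de la seule classification des $\mathrm{Ext}^1$ : il faut exclure des d\'eg\'en\'erescences de longueur sup\'erieure, ce que \cite{stefano} et \cite{stefano1} font en calculant explicitement la filtration par l'Iwahori-socle des repr\'esentations $R_n^-$ (via l'arithm\'etique des vecteurs de Witt, cf. \eqref{witt}) avant de transf\'erer cette information \`a $K$ par exactitude de $\ind{\kop{}}{K}$ et r\'eciprocit\'e de Frobenius --- c'est d'ailleurs exactement le m\^eme type de calcul que reprend le Lemme \ref{lemcrucial} du pr\'esent article pour la descente \`a $K_S$. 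En l'\'etat, ta r\'edaction est un programme de preuve fid\`ele \`a la strat\'egie de la r\'ef\'erence, mais non une preuve.
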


\section{De $K$ \`a $K_{S}$}
\label{main}
Cette section a pour but de combiner les r\'esultats rappel\'es ci-avant afin d'obtenir deux \'enonc\'es techniques nous permettant de d\'emontrer facilement le Th\'eor\`eme \ref{thmppaldelanote}. Nous proc\'edons comme suit : nous commen\c{c}ons par \'etudier plus en d\'etail la nature des $K$-extensions entre poids de Serre contenus dans une repr\'esentation lisse irr\'eductible donn\'ee de $G$, ce qui fournit un raffinement de l'\'enonc\'e du Th\'eor\`eme \ref{stefano}. Nous prouvons ensuite que ces $K$-extensions restent non scind\'ees lorsqu'on se limite \`a les consid\'erer comme des $K_{S}$-extensions.

\subsection{Structure des $K$-extensions entre poids de Serre}
\label{main1}
Traitons tout d'abord le cas d'une repr\'esentation de la forme $\ind{B}{G}(\mu_{\lambda} \otimes \mu_{\lambda^{-1}}\omega^{r})$ avec $\lambda \in \FFF_{p}^{\times}$ et $r \in \{0, \ldots , p-1\}$ fix\'es.  Un calcul direct reposant sur la d\'ecomposition d'Iwahori $G = KB = BK$ montre, par application de la d\'ecomposition de Mackey, que le $\FFF_{p}[K]$-module port\'e par $\ind{B}{G}(\mu_{\lambda} \otimes \mu_{\lambda^{-1}}\omega^{r})$ est isomorphe \`a $\ind{K\cap B}{K}{\chi^s_r}$. D'apr\`es \cite[\S 10]{stefano}, on dispose en outre d'un isomorphisme $K$-\'equivariant\footnote{Donc en particulier $I = K_{0}(p)$-\'equivariant.}
\begin{equation*}
\underset{\underset{n\in \N}{\longrightarrow}}{\lim}\,\ind{\kop{n+1}}{K}{\chi^s_r}\stackrel{\sim}{\longrightarrow}\ind{K\cap B}{K}{\chi^s_r} \ .
\end{equation*}
Par continuit\'e du foncteur d'induction $\ind{\kop{}}{K}$, on en d\'eduit l'existence d'un isomorphisme de $\FFF_{p}[K]$-modules
\begin{equation}
\label{isom1main1}
\ind{\kop{}}{K}{\bigg(\underset{\underset{n\in \N}{\longrightarrow}}{\lim}\,\ind{\kop{n+1}}{\kop{}}{\chi^s_r}\bigg)}\stackrel{\sim}{\longrightarrow}\ind{K\cap B}{K}{\chi^s_r} \ .
\end{equation}
Par ailleurs, on sait gr\^ace \`a \cite[Proposition 5.10]{stefano} que pour tout entier $n \geq 1$, la repr\'esentation $\ind{\kop{n+1}}{\kop{}}{\chi^s_r}$ est unis\'erielle\footnote{Ce qui signifie que sa restriction au sous-groupe $U(\ZZ_{p})$ des matrices unipotentes de $B \cap K$ est une suite d'extensions non scind\'ees d'objets irr\'eductibles.} de dimension $p^n$ sur $\FFF_{p}$ et qu'elle admet la filtration par le $\kop{}$-socle suivante :
\begin{equation*}
\chi_{r}^{s}\textbf{---}\chi_{r}^{s}\mathfrak{a}\textbf{---}\chi_{r}^{s}\mathfrak{a}^2\textbf{---}...\textbf{---}\chi^s_r\mathfrak{a}^{p^n-2}\textbf{---}\chi^s_r\mathfrak{a}^{p^n-1} \ .
\end{equation*}
On obtient donc ainsi le premier r\'esultat suivant.
\begin{lemma}
\label{lemmeind}
Soit $(r, \lambda) \in \{0, \ldots , p-1\} \times \FFF^{\times}_{p}$ une paire de param\`etres. Le $\FFF_{p}[K]$-module port\'e par $\ind{B}{G}(\mu_{\lambda} \otimes \mu_{\lambda^{-1}}\omega^{r})$ est isomorphe \`a $\ind{\kop{}}{K}{R_{\infty}^{-}}$, o\`u $R_{\infty}^{-} := \underset{\underset{n\in \N}{\longrightarrow}}{\lim}\,\ind{\kop{n+1}}{\kop{}}{\chi^s_r}$ est une repr\'esentation unis\'erielle de $\kop{}$ de longueur infinie admettant la filtration par le $\kop{}$-socle suivante :
$$
\chi_{r}^{s}\textbf{---}\chi_{r}^{s}\mathfrak{a}\textbf{---}\chi_{r}^{s}\mathfrak{a}^2\textbf{---}... \ . 
$$
\end{lemma}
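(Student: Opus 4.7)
The first half of the statement is essentially already at our disposal: combining the Mackey/Iwahori identification $\ind{B}{G}(\mu_{\lambda}\otimes\mu_{\lambda^{-1}}\omega^{r})\simeq\ind{K\cap B}{K}{\chi^s_r}$ with the isomorphism \eqref{isom1main1} immediately yields the claimed description of the underlying $\FFF_{p}[K]$-module as $\ind{\kop{}}{K}{R_{\infty}^{-}}$. The real content of the lemma is the uniseriality of $R_\infty^-$ as a $\kop{}$-module together with the computation of its socle filtration, and the plan is to read this structure off the directed system defining $R_\infty^-$.

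First I would make the transition maps explicit: realising $\ind{\kop{n+1}}{\kop{}}{\chi^s_r}$ as the space of $\chi^s_r$-equivariant functions on $\kop{}$ and using the inclusion $\kop{n+2}\subset\kop{n+1}$, the natural $\kop{}$-equivariant injection
\[
\ind{\kop{n+1}}{\kop{}}{\chi^s_r} \hookrightarrow \ind{\kop{n+2}}{\kop{}}{\chi^s_r}
\]
identifies the source with a subrepresentation of $\FFF_p$-dimension $p^n$ inside a target of dimension $p^{n+1}$. By \cite[Proposition 5.10]{stefano}, the target is uniserial of length $p^{n+1}$, hence admits a \emph{unique} $\kop{}$-subrepresentation of each admissible dimension; the $p^n$-dimensional one is $\mathrm{soc}^{p^n-1}$ of the target, whose socle filtration is $\chi^s_r\textbf{---}\chi^s_r\mathfrak{a}\textbf{---}\dots\textbf{---}\chi^s_r\mathfrak{a}^{p^n-1}$. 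By uniqueness, this subrepresentation coincides with the image of the transition map, which therefore identifies the $\kop{}$-socle filtrations of source and target up to level $p^n-1$.

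Passing to the direct limit, the socle filtration of $R_\infty^-$ stabilises at each level: one obtains $\mathrm{soc}^j(R_\infty^-)=\mathrm{soc}^j(\ind{\kop{n+1}}{\kop{}}{\chi^s_r})$ for every $n$ with $p^n>j+1$. The successive graded pieces are accordingly the characters $\chi^s_r\mathfrak{a}^j$ for $j\geq 0$, and the filtration exhausts $R_\infty^-$ since the finite pieces $\ind{\kop{n+1}}{\kop{}}{\chi^s_r}=\mathrm{soc}^{p^n-1}(R_\infty^-)$ do. This gives the uniseriality of $R_\infty^-$ of infinite length, with the announced socle filtration.

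The main obstacle is precisely the verification that the transition maps preserve the socle filtrations stratum by stratum; this is what is settled above by leveraging the uniqueness of subobjects in a uniserial module, rather than by manipulating the explicit $F_\ell(v)$-basis from \eqref{Fellv}. In this way, the bulk of the combinatorial work is already absorbed into \cite[Proposition 5.10]{stefano}, and the passage from the finite pieces to the limit becomes formal.
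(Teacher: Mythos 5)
Your proof is correct and follows essentially the same route as the paper: the Mackey/Iwasawa identification together with the isomorphism \eqref{isom1main1} for the description of the underlying $\FFF_{p}[K]$-module, and \cite[Proposition 5.10]{stefano} for the structure of the finite levels. The only difference is that you spell out the compatibility of the socle filtrations along the transition maps (via uniqueness of submodules of a given length in a uniserial module), a formal step the paper leaves implicit.
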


\noindent La situation est analogue pour les $\FFF_{p}[KZ]$-modules $\mathrm{\Pi}_r$ et $\mathrm{\Pi}_{p-1-r}$ introduits dans l'\'enonc\'e du Th\'eor\`eme \ref{stefano}. Pour tout entier $n \in \NN$, notons $\sigma_{r}^{n}$ le $\FFF_{p}[\kop{n}]$-module d'espace sous-jacent $\displaystyle \bigoplus_{i = 0}^{r} \FFF_{p}X^{i}Y^{r-i}$ sur lequel l'action de $\kop{n}$ est donn\'ee par la formule suivante \cite[\S 3.1]{stefano} : 
\begin{equation*}
\sigma_{r}^{n}\left(\left(\begin{array}{cc} a & b \\ p^{n}c & d \end{array}\right)\right) \cdot X^{i}Y^{r-i} := (dX + cY)^{i}(p^{n}bX + aY)^{r-i} \ . 
\end{equation*}
En posant $R_{n} := \ind{\kop{n}}{K}(\sigma_{r}^{n})$, on dispose gr\^ace \`a \cite[Lemma 3.4]{stefano1} d'une injection de syst\`emes inductifs
\begin{eqnarray*}
\xymatrix{
\ar@{..>}[d]&\ar@{..>}[d]\\
R_0^-\oplus_{R^-_1}\dots\oplus_{R_{n-2}^-}R_{n-1}^-\ar@{^{(}->}[r]\ar@{^{(}->}[d] & R_0\oplus_{R_1}\dots\oplus_{R_{n-2}}R_{n-1}\ar@{^{(}->}[d] \\
R_0^-\oplus_{R^-_1}\dots\oplus_{R_{n}^-}R_{n+1}^-\ar@{^{(}->}[r]\ar@{..>}[d] & R_0\oplus_{R_1}\dots\oplus_{R_{n}}R_{n+1}\ar@{..>}[d]\\
&
}
\end{eqnarray*}
qui induit un isomorphisme $K$-\'equivariant \cite[Proposition 3.9]{stefano}
\begin{eqnarray*}
\underset{\underset{n \text{ impair}}{\longrightarrow}}{\lim}R_0\oplus_{R_1}\dots\oplus_{R_{n}}R_{n+1}\stackrel{\sim}{\longrightarrow} \mathrm{\Pi}_r \ .
\end{eqnarray*}
Chaque $R_0^-\oplus_{R^-_1}\dots\oplus_{R_{n}^-}R_{n+1}^-$ d\'efinit alors une repr\'esentation unis\'erielle de dimension finie sur $\FFF_{p}$ dont la filtration par le $\kop{}$-socle est de la forme \cite[Theorem 5.18]{stefano1} :
$$
\chi_{r}^{s}\mathfrak{a}^r\textbf{---}\chi_{r}^{s}\mathfrak{a}^{r+1}\textbf{---}\chi_{r}^{s}\mathfrak{a}^{r+2}\textbf{---}... \ .
$$
Le lemme suivant \'etablit une relation importante entre $R_0^-\oplus_{R^-_1}\dots\oplus_{R_{n}^-}R_{n+1}^-$ et $R_0\oplus_{R_1}\dots\oplus_{R_{n}}R_{n+1}$.
\begin{lemma}
\label{prepa}
Pour tout entier impair $n \geq -1$, on dispose de la suite exacte courte suivante de $\FFF_{p}[K]$-modules :
\begin{equation}
\label{sesprepa}
0\rightarrow \sym{p-1-r}\otimes{\det}^r\rightarrow \ind{\kop{}}{K}{(R_0^-\oplus_{R^-_1}\dots\oplus_{R_{n}^-}R_{n+1}^-)}\rightarrow
R_0\oplus_{R_1}\dots\oplus_{R_{n}}R_{n+1}\rightarrow 0.
\end{equation}
\end{lemma}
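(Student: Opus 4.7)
Le plan est de construire une surjection $K$-\'equivariante
\begin{equation*}
\ind{\kop{}}{K}{(R_0^-\oplus_{R^-_1}\dots\oplus_{R_{n}^-}R_{n+1}^-)}\twoheadrightarrow R_0\oplus_{R_1}\dots\oplus_{R_{n}}R_{n+1}
\end{equation*}
dont le noyau est isomorphe \`a $\sym{p-1-r}\otimes {\det}^r$. Pour construire ce morphisme, on proc\`ede par r\'eciprocit\'e de Frobenius : la commutativit\'e du diagramme d'injection de syst\`emes inductifs rappel\'ee ci-avant assure que les inclusions $R_m^-\hookrightarrow R_m|_{\kop{}}$ sont compatibles avec les morphismes de transition, donc s'adjoignent en des fl\`eches $K$-\'equivariantes $\ind{\kop{}}{K}{(R_m^-)}\to R_m$ respectant les structures de sommes amalgam\'ees. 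Par exactitude et commutation aux colimites du foncteur d'induction $\mathrm{Ind}_{\kop{}}^{K}$, ces fl\`eches s'assemblent en la fl\`eche souhait\'ee. La surjectivit\'e se v\'erifie facteur par facteur : chaque $R_m=\ind{\kop{m}}{K}{(\sigma_r^m)}$ est $K$-cyclique, et un g\'en\'erateur cyclique standard appartient \`a l'image, ce qui se constate par un calcul direct utilisant la d\'ecomposition de Bruhat $K=\kop{}\sqcup\kop{}w\kop{}$.

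L'identification du noyau se fait par r\'ecurrence sur l'entier impair $n\geq -1$. Le cas de base $n=-1$ se r\'eduit \`a la suite exacte courte classique de $\FFF_p[K]$-modules
\begin{equation*}
0\to \sym{p-1-r}\otimes {\det}^r\to \ind{\kop{}}{K}{(\chi_r^s)}\to \sym{r}\to 0,
\end{equation*}
qui traduit la d\'ecomposition en facteurs de Jordan--H\"older du $K$-module $\ind{\kop{}}{K}{(\chi_r^s)}$ en ses deux poids de Serre. Le sous-objet $\sym{p-1-r}\otimes \det^r$ y est d\'ecrit explicitement par un vecteur de plus haut poids du type $F_\ell(v)$ (au sens de \eqref{Fellv}).

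Pour le pas d'induction de $n$ \`a $n+2$, l'id\'ee est d'ins\'erer une suite exacte courte reliant les sommes amalgam\'ees aux niveaux $n$ et $n+2$ et d'appliquer le lemme du serpent au diagramme induit par le foncteur $\mathrm{Ind}_{\kop{}}^{K}$. L'obstacle principal est alors de v\'erifier que les quotients relatifs n'apportent aucun noyau suppl\'ementaire, c'est-\`a-dire que le foncteur d'induction identifie parfaitement les deux nouveaux facteurs ajout\'es \`a chaque \'etape ; cela reposera sur un contr\^ole fin des facteurs de Jordan--H\"older, en lien avec les filtrations par le $\kop{}$-socle rappel\'ees plus haut et avec la structure uni-s\'erielle des $\ind{\kop{n+1}}{\kop{}}{(\chi_r^s)}$ d\'ej\`a mise en \'evidence.
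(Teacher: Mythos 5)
Your overall strategy coincides with the paper's: induction on the odd integer $n$, with the inductive step handled by the snake lemma applied to the diagram obtained by inducing the inclusion of inductive systems from $\kop{}$ to $K$. Two points, however, keep the argument from closing. First, your base case is misstated: for $n=-1$ the middle term of \eqref{sesprepa} is $\ind{\kop{}}{K}{(R_0^-)}$ with $R_0^-=\chi_r^s\mathfrak{a}^r$, not $\ind{\kop{}}{K}{(\chi_r^s)}$, and the sequence you write for $\ind{\kop{}}{K}{(\chi_r^s)}$ is false when $r\not\equiv 0 \bmod (p-1)$, since by \eqref{regular} the $K$-socle of that module is $\sym{r}$, so $\sym{p-1-r}\otimes{\det}^r$ occurs as its quotient and not as a sub-object. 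The correct base case is $0\to\sym{p-1-r}\otimes{\det}^r\to\ind{\kop{}}{K}{(\chi_r^s\mathfrak{a}^r)}\to\sym{r}\to 0$, which holds because $\chi_r^s\mathfrak{a}^r=\chi_{p-1-r}^s\otimes{\det}^r$, so that inducing this ``conjugate'' character reverses the order of the two Jordan--H\"older factors.

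Second, and more seriously, you explicitly defer the crux of the inductive step. For the snake lemma to propagate \eqref{sesprepa} from level $n-1$ to level $n+1$ without creating an extra kernel, the right-hand vertical map of your diagram must be an isomorphism; concretely one needs $\ind{\kop{}}{K}{(R_{n+1}^-)}\cong R_{n+1}$, so that both rows have the \emph{same} quotient $R_{n+1}/R_n$ and the rightmost kernel vanishes. You identify this as ``l'obstacle principal'' and announce that it ``reposera sur un contr\^ole fin des facteurs de Jordan--H\"older'', but you never establish it --- and a Jordan--H\"older comparison alone would not suffice, since a surjection between modules with the same composition factors is not automatically injective without an actual dimension count. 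The paper's resolution is structural rather than numerical: by the definition of $R_{n+1}^-$ in [Mo2, \S 3] and transitivity of compact induction, $\ind{\kop{}}{K}{(R_{n+1}^-)}$ is literally $\ind{\kop{n+1}}{K}{(\sigma_r^{n+1})}=R_{n+1}$, after which the snake lemma applies cleanly. Supplying this isomorphism (or an equivalent dimension count) is exactly what your proof is missing.
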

\begin{proof} La d\'emonstration s'effectue par r\'ecurrence sur l'entier impair $n \geq -1$, le cas $n = -1$ s'obtenant par un calcul direct qui repose sur la d\'efinition de $R_{0} \simeq \sym{r}$ et de $R_{0}^{-} = \chi^{s}_{r}\frak{a}^{r}$ \cite[\S 3]{stefano1}.\\
Le cas g\'en\'eral se traite \`a l'aide du diagramme commutatif suivant \cite[Proposition 3.5]{stefano1} : 
\begin{eqnarray*}
\def\objectstyle{\scriptscriptstyle}
\xymatrix @C=.05pc {
&&0\ar[rr]&&R_n^{-}\ar[rrr]\ar[ddl]\ar@{->>}'[dd][ddd]&&&R_{n+1}^{-}\ar[rrr]\ar[ddl]\ar@{->>}'[dd][ddd]&&&
R_{n+1}^{-}/R_{n}^{-}\ar[rr]\ar@{=}'[dd][ddd]\ar[ddl]&&0
\\
\\
&0\ar[rr]&&R_n\ar@{->>}[ddd]\ar[rrr]&&&R_{n+1}\ar@{->>}[ddd]\ar[rrr]&&&
R_{n+1}/R_n\ar@{=}[ddd]\ar[rr]&&0&\\
&&0\ar'[r][rr]&&
\dots\oplus_{R_{n-2}^{-}}R_{n-1}^{-}
\ar'[rr][rrr]\ar@{^{(}->}[ddl]
&&&
\dots\oplus_{R_{n}^{-}}R_{n+1}^{-}
\ar'[rr][rrr]\ar@{^{(}->}[ddl]&&&
R_{n+1}^{-}/R_n^{-}\ar[rr]\ar@{^{(}->}[ddl]&&0
\\
\\
&0\ar[rr]&&
(\dots\oplus_{R_{n-2}}R_{n-1})\vert_{\kop{}}\ar[rrr]&&&
(\dots\oplus_{R_{n2}}R_{n+1})\vert_{\kop{}}\ar[rrr]&&&
R_{n+1}/R_{n}\ar[rr]&&0 \ .&
}
\end{eqnarray*}
Par transitivit\'e de l'induction compacte et \cite[\S 3]{stefano1}, on sait qu'il existe un isomorphisme de $\FFF_{p}[K]$-modules $\ind{\kop{}}{K}{R_{n+1}^-}\cong R_{n+1}$. On d\'eduit donc de l'exactitude du foncteur $\ind{K_{0}(p)}{K}$ et de la r\'eciprocit\'e de Frobenius compacte l'existence du diagramme commutatif \`a lignes exactes suivant :
\begin{eqnarray*}
\xymatrix{
0\ar[r]&\dots\oplus_{R_{n-1}}R_{n-1}\ar[r]
&\dots\oplus_{R_{n}}R_{n+1}
\ar[r]&
R_{n+1}/R_{n}\ar[r]&0\\
0\ar[r]&\ind{\kop{}}{K}{
(\dots\oplus_{R_{n-1}^-}R_{n-1}^-)}
\ar[r]\ar@{->>}[u]&\ind{\kop{}}{K}{(
\dots\oplus_{R_{n+1}^-}R_{n+1}^-)}\ar[r]\ar@{->>}[u]&
R_{n+1}/R_{n}\ar[r]\ar@{=}[u]&0 \ .
}
\end{eqnarray*}
Ainsi, si la suite exacte courte \eqref{sesprepa} est v\'erifi\'ee pour l'entier $n-1 \geq -1$, l'application du lemme du serpent au diagramme ci-dessus implique directement que la suite exacte \eqref{sesprepa} est v\'erifi\'ee pour l'entier $n+1$, ce qui ach\`eve la d\'emonstration. \qed
\end{proof}

\noindent La continuit\'e et l'exactitude du foncteur $\ind{K_{0}(p)}{K}$ nous permettent en particulier d'en d\'eduire le r\'esultat suivant.
\begin{corollary}
\label{proposition3.3} Soit $r \in \{0, \ldots p-1\}$ et soit $\mathrm{\Pi}_{r}$ le facteur direct du $\FFF_{p}[KZ]$-module $\pi(r,0,1)$ de $K$-socle \'egal \`a $\sym{r}$. On dispose alors d'une suite exacte courte de $\FFF_{p}[K]$-modules
\begin{equation*}
0\longrightarrow \sym{p-1-r}\otimes{\det}^r\longrightarrow \ind{\kop{}}{K}{(R_{\infty,r}^-)}\longrightarrow \mathrm{\Pi}_r\longrightarrow0 \ , 
\end{equation*}
o\`u $R_{\infty,r}^- := \underset{\underset{n \text{ impair}}{\longrightarrow}}{\lim}R^-_0\oplus_{R^-_1}\dots\oplus_{R^-_{n}}R^-_{n+1}$ est une repr\'esentation unis\'erielle de dimension infinie sur $\FFF_{p}$ dont la filtration par le $\kop{}$-socle est donn\'ee par
\begin{equation*}
\chi_{r}^{s}\mathfrak{a}^r\textbf{---}\chi_{r}^{s}\mathfrak{a}^{r+1}\textbf{---}\chi_{r}^{s}\mathfrak{a}^{r+2}\textbf{---}... \ .
\end{equation*}
\end{corollary}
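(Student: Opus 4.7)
Le plan est de passer à la limite inductive sur les entiers impairs $n \geq -1$ dans la collection de suites exactes courtes fournies par le Lemme \ref{prepa}. On vérifie d'abord que ces suites s'organisent en un système inductif : les flèches de transition sur les termes central et droit sont induites par les inclusions naturelles des produits fibrés (cf.\ le diagramme commutatif utilisé dans la démonstration du Lemme \ref{prepa}), et une chasse au diagramme dans le style du lemme du serpent montre que les flèches induites sur les noyaux $\sym{p-1-r}\otimes\det^r$ sont nécessairement des isomorphismes entre représentations irréductibles isomorphes de $K$, que l'on peut normaliser en l'identité.

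On applique alors le foncteur de colimite filtrante à ce système de suites exactes courtes. Ce foncteur étant exact dans la catégorie des $\FFF_p[K]$-modules, on obtient une nouvelle suite exacte courte. Son terme de gauche reste $\sym{p-1-r}\otimes\det^r$, en tant que colimite d'un système constant. Son terme central devient $\ind{\kop{}}{K}{R_{\infty,r}^-}$ grâce à la continuité du foncteur d'induction $\ind{\kop{}}{K}{}$, elle-même conséquence de la finitude de l'indice $[K : \kop{}]$. Enfin, son terme de droite s'identifie à $\mathrm{\Pi}_r$ via l'isomorphisme $K$-équivariant de \cite[Proposition 3.9]{stefano} rappelé plus haut, ce qui donne la suite exacte courte annoncée.

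Reste à décrire la filtration par le $\kop{}$-socle de $R_{\infty,r}^-$. Chaque étage fini $R_0^-\oplus_{R_1^-}\dots\oplus_{R_n^-}R_{n+1}^-$ est unisériel avec filtration par le $\kop{}$-socle commençant par $\chi_r^s\mathfrak{a}^r\textbf{---}\chi_r^s\mathfrak{a}^{r+1}\textbf{---}\dots$ d'après \cite[Theorem 5.18]{stefano1}, et les flèches de transition respectent ces filtrations par construction (elles préservent les inclusions des socles successifs) ; il en résulte, par passage à la limite, que $R_{\infty,r}^-$ est unisérielle de dimension infinie avec la filtration par le socle annoncée. L'obstacle principal de la preuve est la vérification de la compatibilité des flèches de transition sur les noyaux des suites exactes du Lemme \ref{prepa} ; cette étape faite, le reste consiste en un passage à la limite standard utilisant l'exactitude et la continuité du foncteur d'induction $\ind{\kop{}}{K}{}$.
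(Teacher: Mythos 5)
Your proposal is correct and follows exactly the route the paper takes: the paper deduces the corollary from Lemma \ref{prepa} precisely by invoking the continuity and exactness of $\ind{\kop{}}{K}{}$ to pass to the inductive limit over odd $n$, identifying the colimit of the right-hand terms with $\mathrm{\Pi}_r$ via \cite[Proposition 3.9]{stefano} and reading off the socle filtration of $R_{\infty,r}^-$ from \cite[Theorem 5.18]{stefano1}. Your additional verification that the transition maps on the kernels are isomorphisms (forced by injectivity of the transition maps on the middle terms and irreducibility of $\sym{p-1-r}\otimes\det^r$) is a detail the paper leaves implicit, and it is sound.
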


\subsection{Restriction \`a $K_{S}$}
\label{main2}
Nous allons maintenant \'etudier la restriction \`a $K_{S}$ des extensions entre poids de Serre consid\'er\'ees dans la sous-section pr\'ec\'edente. Commen\c{c}ons par rappeler un r\'esultat d\^u \`a Pa$\check{\text{s}}$k$\bar{\text{u}}$nas \cite[Proposition 5.4]{Pask}, qui traite le cas des $K_{0}(p)$-extensions entre caract\`eres.
\begin{proposition}
\label{pasku}
Soient $\chi$ et $\psi$ deux $\FFF_{p}$-caract\`eres lisses de $\kop{}$. L'espace d'extensions $\mathrm{Ext}^1_{\kop{}/Z_1}(\psi,\chi)$ est non nul si et seulement si $\psi=\chi\mathfrak{a}$. Dans ce cas, c'est un espace de dimension $1$ sur $\FFF_{p}$, et toute extension non triviale de $\chi\mathfrak{a}$ par $\chi$ admet une base $(e_0,e_1)$ sur $\FFF_{p}$ pour laquelle $K_{0}(p)$ agit sur $e_i$ par le
caract\`ere $\chi\mathfrak{a}^{i}$ et telle que
\begin{eqnarray*}
\maq{1+pa}{b}{pc}{1+pd}\cdot e_1=e_1+ce_0 \ .
\end{eqnarray*}
\end{proposition}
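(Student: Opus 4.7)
The plan is to transport the Ext-computation into group cohomology and to exploit the pro-$p$-structure of the Iwahori. Since $\kop{}$ is compact and $\FFF^{\times}$ is uniquely $p$-divisible, any smooth $\FFF$-character of $\kop{}$ is trivial on the pro-$p$-radical $I(1)$ and hence factors through the torus quotient $T(\FF_p)=\kop{}/I(1)$; in particular $\chi$ and $\psi$ are automatically trivial on $Z_1\subset I(1)$, and the standard identification gives
\[
\mathrm{Ext}^{1}_{\kop{}/Z_1}(\psi,\chi)\cong H^{1}(\kop{}/Z_1,\eta),\qquad \eta:=\chi\psi^{-1}.
\]
Applying inflation--restriction to the exact sequence $1\to I(1)/Z_1\to \kop{}/Z_1\to T(\FF_p)\to 1$ and using that $T(\FF_p)$ has order $(p-1)^{2}$ prime to $p$, so that its cohomology with $\FFF$-coefficients vanishes in positive degrees, the Hochschild--Serre spectral sequence collapses to
\[
H^{1}(\kop{}/Z_1,\eta)\cong H^{1}(I(1)/Z_1,\FFF)^{T(\FF_p)},
\]
where the $T(\FF_p)$-action combines conjugation on $I(1)/Z_1$ with multiplication by $\eta$ on the coefficient $\FFF$.

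The key computation is then the Frattini quotient of the pro-$p$-group $I(1)/Z_1$, equivalently $\mathrm{Hom}_{\mathrm{cont}}(I(1)/Z_1,\FFF)$. Writing a generic element of $I(1)$ as $\maq{1+pa}{b}{pc}{1+pd}$, the two functionals
\[
\phi_{+}\colon\maq{1+pa}{b}{pc}{1+pd}\mapsto b\bmod p,\qquad \phi_{-}\colon\maq{1+pa}{b}{pc}{1+pd}\mapsto c\bmod p
\]
are continuous homomorphisms $I(1)/Z_1\to\FFF$. The commutator identity
\[
\Bigl[\maq{1}{b}{0}{1},\maq{1}{0}{pc}{1}\Bigr]\equiv\maq{1+pbc}{0}{0}{1-pbc}\pmod{p^{2}}
\]
shows, after reduction modulo $Z_1$ and using $p\neq 2$ to divide by $2$, that the torus generator of $T^{(1)}/Z_1$ already belongs to $[I(1),I(1)]$. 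Combined with the fact that $p$th powers of pro-$p$ elements lie in the iterated commutator series, this forces the Frattini quotient of $I(1)/Z_1$ to be exactly two-dimensional with dual basis $\{\phi_{+},\phi_{-}\}$.

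Finally, direct conjugation by $t=\maq{\alpha}{0}{0}{\delta}\in T(\FF_p)$ gives $\phi_{\pm}(t^{-1}ut)=\mathfrak{a}(t)^{\mp 1}\phi_{\pm}(u)$, so the twisted action reads $t\cdot\phi_{\pm}=\eta(t)\mathfrak{a}(t)^{\mp 1}\phi_{\pm}$. In the case of the proposition, $\psi=\chi\mathfrak{a}$ corresponds to $\eta=\mathfrak{a}^{-1}$ and the invariant $\phi_{-}$, producing the one-dimensional line $\FFF\cdot\phi_{-}$. Pulling this class back through the Hochschild--Serre isomorphism and normalising the cocycle representative to vanish on $T(\FF_p)$ (possible since $H^{1}(T(\FF_p),-)=0$) yields the basis $(e_0,e_1)$ of the statement with the displayed action of $I(1)$ on $e_1$. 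Non-triviality is automatic: any coboundary for the $\eta$-twisted action vanishes on $I(1)$ (where $\eta$ is trivial), whereas $\phi_{-}$ does not.

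The principal obstacle is the Frattini-quotient calculation of paragraph two: one must carefully track the $Z_1$-reduction of the commutator to see that the toric generators disappear modulo $[I(1),I(1)]\cdot Z_1$, a step that crucially uses the hypothesis $p\neq 2$.
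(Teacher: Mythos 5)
The paper offers no proof of this proposition: it is quoted as \cite[Proposition 5.4]{Pask}, so there is no internal argument to compare yours with. Your route --- reducing to $H^{1}(\kop{}/Z_{1},\chi\psi^{-1})$, collapsing inflation--restriction using that $T(\FF_p)$ has prime-to-$p$ order, and computing the Frattini quotient of $I(1)/Z_{1}$ with its torus eigencharacters --- is the standard one and is essentially how the cited result is proved.

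There is, however, a genuine gap between what your computation establishes and what the statement claims, and you pass over it silently. You correctly find that $\mathrm{Hom}(I(1)/Z_{1},\FFF_{p})$ is two-dimensional, spanned by $\phi_{+}$ and $\phi_{-}$, with twisted actions $t\cdot\phi_{\pm}=\eta(t)\mathfrak{a}(t)^{\mp1}\phi_{\pm}$. This yields a nonzero invariant line not only when $\eta=\mathfrak{a}^{-1}$ (i.e.\ $\psi=\chi\mathfrak{a}$, via $\phi_{-}$) but also when $\eta=\mathfrak{a}$ (i.e.\ $\psi=\chi\mathfrak{a}^{-1}$, via $\phi_{+}$); your write-up simply discards the second case at the words \og In the case of the proposition\fg{} without explaining why $\phi_{+}$ contributes nothing. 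It cannot be explained away: the standard representation $\FFF_{p}^{2}$ of $GL_{2}(\FF_{p})$ restricted to $\kop{}$ is a non-split extension of $\chi^{s}_{1}$ by $\chi^{s}_{1}\mathfrak{a}$ on which $Z_{1}$ acts trivially, so $\mathrm{Ext}^{1}_{\kop{}/Z_{1}}(\chi^{s}_{1},\chi^{s}_{1}\mathfrak{a})\neq 0$ even though $\chi^{s}_{1}\neq(\chi^{s}_{1}\mathfrak{a})\mathfrak{a}$ for $p\geq 5$. In other words, the correct equivalence (and the one Pa\v{s}k\=unas proves) is $\psi=\chi\mathfrak{a}^{\pm1}$; the \og seulement si\fg{} as printed is too strong, and a proof of the printed statement does not exist. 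You should either prove the corrected statement and note that only the $\psi=\chi\mathfrak{a}$ branch (the $\phi_{-}$ class, with its cocycle $c\bmod p$, which you do identify correctly together with the basis $(e_{0},e_{1})$) is used downstream, or flag the discrepancy explicitly. Two smaller points: the displayed commutator is congruent to $\maq{1+pbc}{-pb^{2}c}{0}{1-pbc}$ modulo $p^{2}$, not to a diagonal matrix --- the surviving upper-right entry is a $p$-th power of an element of $U(\ZZ_{p})$ and hence still lies in the Frattini subgroup, but the identity as written is false; and \og $p$-th powers lie in the iterated commutator series\fg{} should be replaced by the correct assertion that the Frattini subgroup of a pro-$p$ group is the closure of $[I(1),I(1)]\cdot I(1)^{p}$.
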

\noindent Fixons un param\`etre $r\in\{0,\dots,p-1\}$. La structure de $\FFF_{p}[K]$-module port\'ee par $\ind{\kop{}}{K}{\chi_r^s}$ est donn\'ee par \cite[Lemma 2.3 \& Theorem 2.4]{BP} et admet la description suivante : 
\begin{eqnarray}
\sym{r}\textbf{---}\sym{p-1-r}\otimes{\det}^r&\textrm{ si }&r\not\equiv 0 \text{ modulo}\ (p-1) \ ,\label{regular}\\
\sym{p-1}\oplus\sym{0}&\textrm{ si }&r\equiv 0 \text{ modulo}\ (p-1) \ . \label{nonregular}
\end{eqnarray}
Les fonctions $F_{\ell}(v)$ d\'efinies par la formule \eqref{Fellv} permettent en outre d'expliciter certains espaces de vecteurs invariants ou coinvariants sous $K_{0}(p)$. Plus pr\'ecis\'ement, en fixant un vecteur non nul $e \in \chi_{r}^{s}$, on dispose gr\^ace \`a \cite[Lemma 2.5]{BP} des formules suivantes lorsque $r\not\equiv 0 \text{ modulo}\, (p-1)$ :
\begin{eqnarray}
(\mathrm{soc}_K(\ind{\kop{}}{K}{\chi_r^s}))^{\kop{}}&=&\langle F_{0}(e)\rangle_{\FFF_{p}} \ ;
\label{primo}\\
(\mathrm{cosoc}_K(\ind{\kop{}}{K}{\chi_r^s}))_{\kop{}}&=&\langle F_{p-1}(e)\rangle_{\FFF_{p}} \ . 
\label{secondo}
\end{eqnarray}
L'isomorphisme de $\FFF_{p}[K]$-modules $\sym{p-1}\oplus\sym{0}\cong\ind{\kop{}}{K}{\1}$ permet de plus de d\'eduire de \cite[Lemma 2.6]{BP} les cas $r = 0$ et $r = p-1$ : 
\begin{eqnarray}
(\sym{p-1})^{\kop{}}&=&\langle F_{0}(e)\rangle_{\FFF_{p}}
\label{primononregular}\\
(\sym{p-1})_{\kop{}}&=&\langle F_{p-1}(e)+\stand{1}{e}\rangle_{\FFF_{p}}
\label{secondononregular}\\
\sym{0}&=&\langle F_{0}(e)+\stand{1}{e}\rangle_{\FFF_{p}}
\label{terzononregular}
\end{eqnarray}
Nous allons \`a pr\'esent utiliser la Proposition \ref{pasku} dans le cas $\chi = \chi_{r}^{s}$ pour construire nos extensions entre poids de Serre. Par exactitude du foncteur $\ind{K_{0}(p)}{K}$, l'extension non triviale $E$ de $\chi_{r}^{s}\mathfrak{a}$ par $\chi_{r}^{s}$ d\'ecrite dans la Proposition \ref{pasku} d\'efinit une $K$-extension de la forme 
\begin{equation*}
0 \longrightarrow \mathrm{Ind}_{\kop{}}^{K}(\chi_{r}^{s}) \longrightarrow \ast \stackrel{pr}{\longrightarrow} \mathrm{Ind}_{\kop{}}^{K}(\chi_{r}^{s}\frak{a}) \longrightarrow 0 \ . 
\end{equation*}
Pour $r \not=2$, on d\'efinit $E_{r}$ comme l'image r\'eciproque de $\mathrm{soc}_K(\ind{\kop{}}{K}(\chi_{r}^s\mathfrak{a}))$ dans l'extension induite
\begin{eqnarray}
\label{extension2}
 0 \longrightarrow  \mathrm{cosoc}_K(\ind{\kop{}}{K}(\chi_{r}^s)) \longrightarrow \ast \stackrel{pr}{\longrightarrow} \ind{\kop{}}{K}(\chi_{r}^s\frak{a}) \longrightarrow 0 \ ,
\end{eqnarray}
sous r\'eserve de poser $\mathrm{cosoc}_K(\ind{\kop{}}{K}(1)) \defeq \sym{p-1}$.\\
Pour $r=2$, auquel cas on a $\chi_{2}^{s}\frak{a} = \det$, on d\'efinit $E_{2,0}$ et $E_{2,p-1}$ comme les images r\'eciproques respectives de $\sym{0}\otimes {\det}$ et de $\sym{p-1}\otimes{\det}$ dans l'extension \eqref{extension2}.\\

\noindent En particulier, on voit que $E_r$ (pour $r\neq 2$), $E_{2,0}$ et $E_{2,p-1}$ poss\`edent respectivement les filtrations $K$-\'equivariantes suivantes\footnote{Les extensions sont ici not\'ees en pointill\'e car \`a ce stade, nous ne savons pas encore que ces extensions sont non scind\'ees : c'est l'objet du Corollaire \ref{nonscindage}. } : 
\begin{eqnarray*}
(E_r) & :&\sym{\lfloor p-1- r\rfloor}\otimes{\det}^r\cdots\sym{\lfloor r-2\rfloor}\otimes{\det} \ ;\\
(E_{2,0}) & :&\sym{p-3}\otimes{\det}^2\cdots{\det} \ ; \\
(E_{2,p-1}) & :&\sym{p-3}\otimes{\det}^2\cdots{\sym{p-1}\otimes{\det}} \ .
\end{eqnarray*}
Rappelons maintenant que pour tous scalaires $\lambda_0,\mu\in\fp$, on a l'\'egalit\'e suivante dans $\ZZ_{p}/p^2\ZZ_{p}$ \cite[Chap. IX, \S 1, n$^{\circ}$3]{Bourb} :
\begin{eqnarray}
\label{witt}
[\lambda_0]+[\mu]\equiv [\lambda_0+\mu]+p[P(\lambda_0,\mu)]\quad\text{mod}\, p^2 
\end{eqnarray}
avec $P(\lambda_0,\mu)\in\fp[\lambda_0,\mu]$ polyn\^ome de degr\'e $p-1$ en $\lambda_{0}$ et de coefficient dominant \'egal \`a $\pm\mu$.\\

\noindent Nous pouvons alors d\'emontrer le r\'esultat technique suivant, sur lequel repose notre preuve du Th\'eor\`eme \ref{thmppaldelanote}.
\begin{lemma}
\label{lemcrucial}
Les repr\'esentations de $U(\ZZ_{p})$ port\'ees par $E_{r}$, $E_{2,0}$ et $E_{2,p-1}$ sont unis\'erielles.
\end{lemma}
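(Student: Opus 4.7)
The plan is to reduce the uniseriality assertion to the non-vanishing of a single explicit term in a Witt-type expansion, by exploiting Proposition \ref{pasku} to describe the underlying $\kop{}$-equivariant extension, then inducing to $K$ and restricting to $U(\ZZ_p)$. Since $U(\ZZ_p)\cong\ZZ_p$ acts smoothly on the finite-dimensional $\FFF_p$-vector space $E_r$, the action factors through a finite cyclic $p$-group, so the group algebra is a local principal ideal ring of the form $\FFF_p[t]/(t^{p^N})$; over such a ring a finite-dimensional module is uniserial if and only if its socle is one-dimensional (the indecomposables being the modules $\FFF_p[t]/(t^k)$). As each Serre weight $\sym{k}$ is already uniserial under $U(\ZZ_p)$ with one-dimensional socle, the short exact sequence of $U(\ZZ_p)$-modules $0\to B\to E_r\to T\to 0$ (with $B$ and $T$ the bottom and top Serre weights of $E_r$) forces $\dim_{\FFF_p}E_r^{U(\ZZ_p)}\in\{1,2\}$. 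It suffices therefore to show that no lift to $E_r$ of the generator of $T^{U(\ZZ_p)}$ is $U(\ZZ_p)$-invariant.

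By (\ref{primo}), this generator corresponds to $F_0(e_1)\in\ind{\kop{}}{K}{(\chi_r^s\frak{a})}$, which lifts canonically to $F_0(e_1)\in\ind{\kop{}}{K}{E}$, where $E$ denotes the non-split $\kop{}$-extension of $\chi_r^s\frak{a}$ by $\chi_r^s$ supplied by Proposition \ref{pasku}. For $\mu\in\FF_p$ and $u_\mu:=\maq{1}{[\mu]}{0}{1}\in U(\ZZ_p)$, the Witt identity (\ref{witt}) gives directly the matrix factorisation
\begin{equation*}
u_\mu\cdot\trsup{\lambda_0}\;=\;\trsup{\lambda_0+\mu}\cdot\maq{1}{0}{p[P(\lambda_0,\mu)]}{1},
\end{equation*}
whose second factor belongs to $\kop{}$. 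Combining this with the action of the right-hand factor on $e_1$ from Proposition \ref{pasku}, and applying the change of variable $\lambda_0':=\lambda_0+\mu$, yields
\begin{align*}
(u_\mu-1)\,F_0(e_1) &\;=\;\sum_{\lambda_0'\in\FF_p}P(\lambda_0'-\mu,\mu)\,\trsup{\lambda_0'}\stand{1}{e_0}\\
&\;=\;\pm\mu\,F_{p-1}(e_0)+\sum_{\ell=0}^{p-2}c_\ell(\mu)\,F_\ell(e_0),
\end{align*}
since $P(\lambda_0'-\mu,\mu)$ is a polynomial in $\lambda_0'$ of degree $p-1$ with leading coefficient $\pm\mu$.

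By (\ref{secondo}), or by the analogues (\ref{secondononregular})--(\ref{terzononregular}) in the non-regular case, the image of $F_{p-1}(e_0)$ in the quotient $\ind{\kop{}}{K}{\chi_r^s}\twoheadrightarrow B$ generates the one-dimensional $U(\ZZ_p)$-cosocle $B/(u_\mu-1)B$. For $\mu\in\FF_p^\times$ the coefficient $\pm\mu$ is non-zero, hence $(u_\mu-1)\,F_0(e_1)$ has non-zero image in $B/(u_\mu-1)B$; since modifying the lift $F_0(e_1)$ by some $b\in B$ changes $(u_\mu-1)\,F_0(e_1)$ only by an element of $(u_\mu-1)B$, no lift of the generator of $T^{U(\ZZ_p)}$ can be $U(\ZZ_p)$-invariant. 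Thus $E_r^{U(\ZZ_p)}=B^{U(\ZZ_p)}$ is one-dimensional, which gives the desired uniseriality; the cases $E_{2,0}$ and $E_{2,p-1}$ are handled in exactly the same way, starting from the analogous generators of $T^{U(\ZZ_p)}$ dictated by (\ref{primononregular}) and (\ref{terzononregular}). The main technical hurdle is the extraction, via (\ref{witt}), of the non-zero leading coefficient $\pm\mu$ of $P(\lambda_0,\mu)$: it is precisely this non-vanishing that ensures the $F_{p-1}(e_0)$-component of $(u_\mu-1)\,F_0(e_1)$ does not vanish and hence that the extension remains non-split once restricted to $U(\ZZ_p)$.
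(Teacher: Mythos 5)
Your proof is correct and follows essentially the same route as the paper's: both reduce uniseriality (via the uniseriality of Serre weights restricted to $U(\ZZ_{p})$ and the structure of smooth $\FFF_{p}[U(\ZZ_{p})]$-modules) to the non-vanishing of $(u_\mu-1)\cdot F_{0}(e_{1})$ in the one-dimensional $U(\ZZ_{p})$-cosocle of the bottom Serre weight, and both extract the decisive coefficient $\pm\mu$ from the degree-$(p-1)$ leading term of $P(\lambda_0,\mu)$ via the Witt identity \eqref{witt}. The only cosmetic difference is that the paper performs the computation inside the explicit two-dimensional subquotient $\overline{E}_{r}'$, where the terms $F_{\ell}(e_{0})$ with $\ell\neq p-1$ visibly vanish, whereas you project onto $B/(u_\mu-1)B$ at the very end -- the same verification in different clothing.
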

\begin{proof}
Notons $\overline{E}_r$, $\overline{E}_{2,0}$ et $\overline{E}_{2,p-1}$ les quotients respectifs des $\FFF_{p}[U(\ZZ_{p})]$-modules $E_r$, $E_{2,0}$ et $E_{2,p-1}$ par
le noyau de la projection naturelle sur les $U(\zp)$-coinvariants 
\begin{equation*}
\sym{\lfloor p-1- r\rfloor}\otimes{\det}^r\twoheadrightarrow(\sym{p-1-\lfloor r\rfloor}\otimes{\det}^r)_{U(\zp)} \ .
\end{equation*}
Ce sont des repr\'esentations de $U(\zp)$ qui satisfont par construction aux suites exactes courtes suivantes de $\FFF_{p}[U(\zp)]$-modules :
\begin{eqnarray}
0\longrightarrow (\sym{\lfloor p-1- r\rfloor}\otimes{\det}^r)_{U(\zp)}\longrightarrow\overline{E}_r\longrightarrow\sym{\lfloor r-2\rfloor}\longrightarrow 0 \ ; & \  \label{EbarR}\\
0\longrightarrow (\sym{p-3}\otimes{\det}^2)_{U(\zp)}\longrightarrow\overline{E}_{2,0}\longrightarrow \1\longrightarrow 0 \ ; & \ \label{Ebar20}\\
0\longrightarrow (\sym{p-3}\otimes{\det}^2)_{U(\zp)}\longrightarrow\overline{E}_{2,p-1}\longrightarrow \sym{p-1} \longrightarrow 0 \ . & \ \label{Ebar2p-1}  
\end{eqnarray}
On peut donc d\'efinir $\overline{E}_{r}'$ (resp. $\overline{E}_{2,0}'$, $\overline{E}_{2,p-1}'$) comme la sous-repr\'esentation de $\overline{E}_r$ (resp. $\overline{E}_{2,0}$, $\overline{E}_{2,p-1}$) obtenue par image reciproque de $(\sym{\lfloor r-2\rfloor})^{U(\zp)}$ (resp. $\1$, $(\sym{p-1})^{U(\zp)}$) \`a partir de la suite exacte \eqref{EbarR} (resp. \eqref{Ebar20}, \eqref{Ebar2p-1}). La restriction \`a $U(\zp)$ des poids de Serre de $GL_{2}(\ZZ_{p})$ \'etant unis\'erielle, il nous suffit de prouver que les $U(\zp)$-extensions $\overline{E}_{r}'$, $\overline{E}_{2,0}'$ et $\overline{E}_{2,p-1}'$ ne sont pas scind\'ees pour conclure.\\
Pour ce faire, consid\'erons une base $(e_{0}, e_{1})$ adapt\'ee \`a l'extension non triviale $E$ au sens de la Proposition \ref{pasku}. Les identit\'es (\ref{primo}), (\ref{secondo}), (\ref{primononregular}), (\ref{secondononregular}) et (\ref{terzononregular}) permettent alors de voir que l'on a 
\begin{eqnarray*}
&&\overline{E}_{r}' = \langle F_{p-1}(e_{0}), F_{0}(e_{1})\rangle_{\fpbar} \ , \\
&&\overline{E}_{2,0}' = \langle F_{p-1}(e_{0}), F_{0}(e_{1})+\stand{1}{e}\rangle_{\fpbar} \ , \\
&&\overline{E}_{2,p-1}' = \langle F_{p-1}(e_{0}), F_{0}(e_{1})\rangle_{\fpbar} \ ,
\end{eqnarray*}
les fonctions $F_{p-1}(e_{0})$ et $F_{0}(e_{1})$ \'etant ici des \'el\'ements de $\ind{K_{0}(p)}{K}(E)$.\\

\noindent \'Etant donn\'e que $1$ est un g\'en\'erateur topologique de $\zp$, l'action de $U(\zp)$ sur $\overline{E}_{r}'$, $\overline{E}_{2,0}'$ et $\overline{E}_{2,p-1}'$ est enti\`erement d\'etermin\'ee par l'action des matrices de la forme $\maq{1}{[\mu]}{0}{1}$ avec $\mu\in\fp$. En remarquant que $F_{\ell}(e_{0})$ est nul dans $\overline{E}_{r}'$, $\overline{E}_{2,0}'$ et $\overline{E}_{2,p-1}'$ lorsque $\ell\neq p-1$, on obtient par un calcul direct reposant sur l'\'egalit\'e modulaire \eqref{witt} et sur la d\'efinition des vecteurs $(e_{0}, e_{1})$ que l'on a, pour tout scalaire non nul $\mu \in \FF_{p}$,
$$
\left(\maq{1}{[\mu]}{0}{1}-I_{2}\right)\cdot F_{0}(e_{1})=\pm\mu F_{p-1}(e_0) \not= 0 \ .
$$
Ceci ach\`eve la d\'emonstration une fois que l'on a remarqu\'e que $F_{p-1}(e_0)$ est fixe sous l'action de $U(\zp)$. \qed
\end{proof}
\begin{corollary}
\label{nonscindage}
\begin{enumerate}
\item Les $K_{S}$-extensions port\'ees par $E_{r}$ (pour $r\neq 2$), $E_{2,0}$ et $E_{2,p-1}$ ne sont pas scind\'ees.
\item La filtration par le $K$-socle de l'induite $\ind{\kop{}}{K}{\chi^s_r}$ est stable par restriction \`a $K_{S}$.
\end{enumerate}

\end{corollary}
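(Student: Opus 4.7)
La partie (1) d\'ecoule imm\'ediatement du Lemme \ref{lemcrucial}. Comme $U(\mathbb{Z}_p) \subset K_S$, toute scission non triviale comme $K_S$-modules se traduirait par une d\'ecomposition en somme directe comme $U(\mathbb{Z}_p)$-modules. Or le Lemme \ref{lemcrucial} assure que les $U(\mathbb{Z}_p)$-repr\'esentations port\'ees par $E_r$, $E_{2,0}$ et $E_{2,p-1}$ sont unis\'erielles de longueur au moins deux, donc ind\'ecomposables ; elles demeurent ainsi ind\'ecomposables comme $K_S$-modules, ce qui exclut toute scission des extensions consid\'er\'ees (dont les sous-objets et quotients sont non nuls).

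Pour la partie (2), mon plan repose sur la combinaison de la d\'ecomposition de Mackey et de la r\'eciprocit\'e de Frobenius. La transitivit\'e de l'action de $K_S$ sur $K/\kop{} \simeq \mathbb{P}^1(\mathbb{F}_p)$ donne $K = K_S \cdot \kop{}$, d'o\`u l'isomorphisme $K_S$-\'equivariant
\[
\mathrm{Res}_{K_S}^{K}(\ind{\kop{}}{K}(\chi^s_r)) \simeq \ind{I_S}{K_S}(\chi^s_r),
\]
o\`u $I_S = \kop{} \cap K_S$. La r\'eciprocit\'e de Frobenius (appliqu\'ee \`a cette induction d'indice fini) donne alors, pour tout $s \in \{0, \ldots, p-1\}$,
\[
\mathrm{Hom}_{K_S}(\sym{s}, \ind{I_S}{K_S}(\chi^s_r)) \simeq \mathrm{Hom}_{I_S}(\mathrm{Res}_{I_S}\sym{s}, \mathrm{Res}_{I_S}\chi^s_r),
\]
espace non nul si et seulement si le caract\`ere du $I_S$-cosocle de $\sym{s}$ (donn\'e par $\mathrm{diag}(a, a^{-1}) \mapsto a^{-s}$) co\"{\i}ncide avec $\mathrm{Res}_{I_S}\chi^s_r$ (qui vaut $\mathrm{diag}(a, a^{-1}) \mapsto a^{-r}$), c'est-\`a-dire si et seulement si $s \equiv r \pmod{p-1}$. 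On en d\'eduit que le $K_S$-socle de $\ind{I_S}{K_S}(\chi^s_r)$ co\"{\i}ncide avec le $K$-socle prescrit par \eqref{regular}--\eqref{nonregular}. Comme la filtration par le $K$-socle est de longueur au plus deux et que le quotient par ce socle reste $K_S$-irr\'eductible (puisque $\sym{p-1-r}\otimes\det^r$ restreint \`a $K_S$ est simplement $\sym{p-1-r}$, irr\'eductible), ou est nul dans le cas non r\'egulier, la stabilit\'e voulue s'ensuit par it\'eration.

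L'\'etape principale --- et la seule vraiment non imm\'ediate --- est le calcul de Frobenius ci-dessus : une fois identifi\'e le caract\`ere $a \mapsto a^{-s}$ du $I_S$-cosocle de $\sym{s}$ via la d\'etermination du vecteur de plus bas poids sous le tore $T_S$ de $SL_2(\mathbb{F}_p)$, la comparaison avec $\mathrm{Res}_{I_S}\chi^s_r$ est imm\'ediate et fournit la conclusion. \`A noter que la partie (2) pourrait aussi \^etre d\'emontr\'ee par un calcul direct \`a la mani\`ere du Lemme \ref{lemcrucial} en analysant l'action de $U(\mathbb{Z}_p)$ sur un sous-quotient bidimensionnel appropri\'e de $\ind{\kop{}}{K}\chi^s_r$ port\'e par les vecteurs $F_0(e)$ et $F_{p-1}(e)$, mais l'approche par Frobenius a l'avantage d'\^etre plus concise et d'\'eviter toute manipulation explicite de la congruence \eqref{witt}.
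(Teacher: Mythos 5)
Votre d\'emonstration de la premi\`ere assertion est correcte et co\"{\i}ncide avec celle du texte : l'unis\'erialit\'e \'etablie au Lemme \ref{lemcrucial} entra\^ine l'ind\'ecomposabilit\'e de $E_r$, $E_{2,0}$ et $E_{2,p-1}$ comme $\FFF_{p}[U(\ZZ_{p})]$-modules, donc a fortiori comme $\FFF_{p}[K_S]$-modules, ce qui exclut tout scindage.

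Pour la seconde assertion, votre argument est correct mais emprunte une route r\'eellement diff\'erente de celle du texte. Le texte reste du c\^ot\'e de $U(\ZZ_{p})$ : la d\'ecomposition de Mackey pour $U(\ZZ_{p})\backslash K/\kop{}$ fournit $\ind{\kop{}}{K}{\chi^s_r}\vert_{U(\zp)}\simeq\rho\oplus\1$ avec $\rho$ unis\'erielle de dimension $p$, et l'unicit\'e de Krull--Schmidt interdit alors, dans le cas r\'egulier $1\leq r\leq p-2$, une d\'ecomposition en deux facteurs unis\'eriels de dimensions $r+1$ et $p-r$ ; le scindage sur $K_S$ est donc impossible. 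Vous calculez au contraire directement le $K_S$-socle : la transitivit\'e de $K_S$ sur $K/\kop{}\simeq\PP^1(\FF_p)$ donne bien $\mathrm{Res}_{K_S}^{K}\big(\ind{\kop{}}{K}{\chi^s_r}\big)\simeq\ind{I_S}{K_S}(\chi^s_r)$, la r\'eciprocit\'e de Frobenius ram\`ene le calcul au caract\`ere du $I_S$-cosocle de $\sym{s}$, et la condition $s\equiv r\ (\mathrm{mod}\ p-1)$ que vous obtenez est exacte. Les deux approches sont de co\^ut comparable ; la v\^otre identifie le socle sans repasser par le Lemme \ref{lemcrucial}, tandis que celle du texte recycle exactement le m\^eme m\'ecanisme que pour la premi\`ere assertion. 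Un seul point m\'erite d'\^etre explicit\'e chez vous : lorsque $r=(p-1)/2$, les deux facteurs de Jordan--H\"older $\sym{r}$ et $\sym{p-1-r}\otimes{\det}^r$ deviennent isomorphes apr\`es restriction \`a $K_S$, et la seule non-nullit\'e de $\mathrm{Hom}_{K_S}(\sym{r},\ind{I_S}{K_S}(\chi^s_r))$ ne suffit pas \`a exclure le scindage ; il faut observer que cet espace est de dimension exactement $1$, ce qui d\'ecoule toutefois imm\'ediatement de votre propre calcul puisque la restriction de $\sym{r}$ \`a $U(\ZZ_{p})$ est unis\'erielle, de sorte que son $I_S$-cosocle est une droite et que $\mathrm{Hom}_{I_S}(\sym{r},\chi^s_r)$ est au plus de dimension $1$.
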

\begin{proof}
La premi\`ere assertion est une cons\'equence directe du Lemme \ref{nonscindage} puisque $U(\ZZ_{p})$ est un sous-groupe de $K_{S}$. La seconde assertion d\'ecoule quant \`a elle du Lemme \ref{nonscindage} et de la d\'ecomposition de Mackey qui assure que l'on a 
$$ \ind{I}{K}(\chi_{r}^{s})\vert_{U(\zp)} \simeq \rho \oplus \1 $$
avec $\rho$ repr\'esentation unis\'erielle de $U(\ZZ_{p})$ de dimension $p$ sur $\FFF_{p}$. \qed
\end{proof}

\section{Preuve du Th\'eor\`eme \ref{thmppaldelanote}}
Nous pouvons maintenant d\'emontrer notre r\'esultat principal, dont nous rappelons l'\'enonc\'e ci-dessous.
\begin{theorem}
Soit $r \in \{0, \ldots , p-1\}$ et soit $\lambda \in \FFF^{\times}_{p}$.
\begin{enumerate}
\item La repr\'esentation $\ind{B_{S}}{G_{S}}(\mu_{\lambda}\omega^{p-1-r})$ admet la filtration par le $K_{S}$-socle suivante : 
	\begin{eqnarray*}
	\mathrm{SocFil}(\ind{I_S}{K_S}(\chi_{r}^{s}))\textbf{---}\mathrm{SocFil}(\ind{I_S}{K_S}(\chi_{r-2}^s))\textbf{---}
	\mathrm{SocFil}(\ind{I_S}{K_S}(\chi_{r-4}^s))\textbf{---}\dots.
	\end{eqnarray*}

\item La repr\'esentation de Steinberg $St_{S}$ admet la filtration par le $K_{S}$-socle suivante : 
	\begin{eqnarray*}
	\sym{p-1}\textbf{---}\mathrm{SocFil}(\ind{I_{S}}{K_{S}}(\frak{a}))\textbf{---}\mathrm{SocFil}(\ind{I_{S}}{K_{S}}(\frak{a}^{2}))\textbf{---} 
	\mathrm{SocFil}(\ind{I}{K}(\frak{a}^{3}))\textbf{---} \dots \ .
	\end{eqnarray*}

\item La repr\'esentation supersinguli\`ere $\pi_{r}$ admet la filtration par le $K_{S}$-socle suivante :
	\begin{eqnarray*}
	\sym{r}\textbf{---}\mathrm{SocFil}(\ind{I_S}{K_S}(\chi_{-r-2}^s))\textbf{---}
	\mathrm{SocFil}(\ind{I_S}{K_S}(\chi_{-r-4}^s))
	\textbf{---}\mathrm{SocFil}(\ind{I_S}{K_S}(\chi_{-r-6}^s))\textbf{---}\dots
	\end{eqnarray*}
\end{enumerate}
\end{theorem}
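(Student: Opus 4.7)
The plan is to transport the $K$-socle filtrations established in Theorem \ref{stefano} down to $SL_{2}(\QQ_{p})$ via the restriction identities of Theorem \ref{ramla1}, and to verify by means of Corollary \ref{nonscindage} that the resulting filtrations are indeed $K_{S}$-socle filtrations.

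For part (1), the key observation is that $\FFF_{p}$ being algebraically closed, every $\lambda \in \FFF_{p}^{\times}$ is a square: fixing $\nu$ with $\nu^{2} = \lambda$, Theorem \ref{ramla1}(1) identifies $\ind{B_{S}}{G_{S}}(\mu_{\lambda}\omega^{p-1-r})$ with the restriction to $G_{S}$ of $\ind{B}{G}(\mu_{\nu} \otimes \mu_{\nu^{-1}}\omega^{r})$, whose $K$-socle filtration is given by Theorem \ref{stefano}(1) with blocks $\ind{I}{K}(\chi^{s}_{r}\mathfrak{a}^{i})$. From $K = I \cdot K_{S}$ and $I \cap K_{S} = I_{S}$, the Mackey decomposition furnishes
\[ \ind{I}{K}(\chi^{s}_{r}\mathfrak{a}^{i})\vert_{K_{S}} \simeq \ind{I_{S}}{K_{S}}(\chi^{s}_{r-2i}), \]
the parameter shift following from the identity $(\chi^{s}_{r}\mathfrak{a}^{i})(\mathrm{diag}(a,a^{-1})) = a^{2i-r}$. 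Corollary \ref{nonscindage}(2) ensures that each block's $K$-socle filtration persists as the $K_{S}$-socle filtration of $\ind{I_{S}}{K_{S}}(\chi^{s}_{r-2i})$, while Corollary \ref{nonscindage}(1), applied to the consecutive inter-block extensions appearing in Theorem \ref{stefano}(1), guarantees that these extensions remain non-split over $K_{S}$. Part (2) is treated by the same pattern using Theorem \ref{ramla1}(3) for $St\vert_{G_{S}} \simeq St_{S}$, Theorem \ref{stefano}(2) for the $K$-socle filtration of $St$, and the $K_{S}$-irreducibility of $\sym{p-1}$.

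For part (3), Theorem \ref{ramla1}(4) together with Theorem \ref{stefano}(3) produces two direct-sum decompositions of $\pi(r,0,1)\vert_{K_{S}}$, namely
\[ \pi_{r}\vert_{K_{S}} \oplus \pi_{p-1-r}\vert_{K_{S}} \simeq \mathrm{\Pi}_{r}\vert_{K_{S}} \oplus \mathrm{\Pi}_{p-1-r}\vert_{K_{S}}. \]
Theorem \ref{ramla2} identifies $\pi_{r}^{I_{S}(1)}$ as the line generated by $v_{r} = \overline{[I_{2}, x^{r}]}$, on which $I_{S}$ acts by $\omega^{r}$, and pins down $\sym{r}$ as a $K_{S}$-sub-representation of $\pi_{r}$. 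Comparing with the $K$-socles $\sym{r}$ of $\mathrm{\Pi}_{r}$ and $\sym{p-1-r}$ of $\mathrm{\Pi}_{p-1-r}$ yields the isomorphism $\pi_{r}\vert_{K_{S}} \simeq \mathrm{\Pi}_{r}\vert_{K_{S}}$; the self-dual case $r = (p-1)/2$ presents no difficulty since then $\pi_{r} = \pi_{p-1-r}$ and $\mathrm{\Pi}_{r}\vert_{K_{S}} \simeq \mathrm{\Pi}_{p-1-r}\vert_{K_{S}}$, so both decompositions reduce to the same summand counted twice. Restricting the $K$-socle filtration of $\mathrm{\Pi}_{r}$ given by Theorem \ref{stefano}(3) to $K_{S}$ as in part (1) then yields the filtration claimed.

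The only serious obstacle, already overcome in Section \ref{main}, is Corollary \ref{nonscindage}: without the non-scindage of the relevant $K_{S}$-extensions, the restricted filtrations would collapse into fewer blocks. Once this is granted, the three cases assemble uniformly from Theorem \ref{ramla1}, Theorem \ref{stefano}, the Mackey decomposition, and, for part (3), a socle/character identification via Theorem \ref{ramla2}.
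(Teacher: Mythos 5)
Your treatment of parts (1) and (2) is sound and is essentially the paper's own argument: transport the $K$-socle filtrations of Theorem \ref{stefano} through the identifications of Theorem \ref{ramla1} (after choosing a square root of $\lambda$), compute the restriction of each block via Mackey and the identity $\chi^{s}_{r}\mathfrak{a}^{i}\vert_{B_{S}(\FF_{p})}=\chi^{s}_{r-2i}$, and invoke Corollary \ref{nonscindage} to see that neither the intra-block nor the inter-block extensions split over $K_{S}$. The paper does exactly this, organising the blocks via Lemma \ref{lemmeind} and Corollary \ref{proposition3.3}; it routes the Steinberg case through the non-split sequence $\1\rightarrow\ind{B_{S}}{G_{S}}(\1)\rightarrow St_{S}$ rather than through $St\vert_{G_{S}}\simeq St_{S}$ directly, which is a cosmetic difference.

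The gap is in part (3), at the step ``comparing with the $K$-socles \dots\ yields the isomorphism $\pi_{r}\vert_{K_{S}}\simeq\mathrm{\Pi}_{r}\vert_{K_{S}}$''. From the two decompositions $\pi_{r}\oplus\pi_{p-1-r}=\mathrm{\Pi}_{r}\oplus\mathrm{\Pi}_{p-1-r}$ and the fact that $\pi_{r}$ and $\mathrm{\Pi}_{r}$ have the same irreducible $K_{S}$-socle, you cannot immediately conclude that the summands match: Krull--Schmidt is not available off the shelf for these infinite-dimensional smooth representations, and a direct summand of $M=\mathrm{\Pi}_{r}\vert_{K_{S}}\oplus\mathrm{\Pi}_{p-1-r}\vert_{K_{S}}$ whose socle is $\sym{r}$ need not a priori be isomorphic to $\mathrm{\Pi}_{r}\vert_{K_{S}}$. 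The step can be repaired --- when $r\neq p-1-r$ the multiplicity-one socle forces $\pi_{r}\cap\mathrm{\Pi}_{p-1-r}=0$, whence mutual $K_{S}$-embeddings $\pi_{r}\hookrightarrow\mathrm{\Pi}_{r}$ and $\mathrm{\Pi}_{r}\hookrightarrow\pi_{r}$, which are isomorphisms because every layer of the socle filtration of $M$ is finite-dimensional and the filtration is exhaustive; the case $r=(p-1)/2$, where the socle has multiplicity two, needs separate care --- but this is a genuine argument, not a remark. The paper sidesteps the issue entirely with Lemma \ref{lemme2}: $\mathrm{\Pi}_{r}$ is stable under $\alpha_{0}=\mathrm{diag}(p^{-1},p)$, hence under $G_{S}$ by the Cartan decomposition, and contains $v_{r}$, so it contains the irreducible $\pi_{r}=\langle G_{S}\cdot v_{r}\rangle$; comparing the two length-two decompositions of $\pi(r,0,1)$ then gives $\pi_{r}=\mathrm{\Pi}_{r}$ as $\FFF_{p}[G_{S}]$-modules. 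You should either prove this stability statement or write out the Krull--Schmidt-type argument explicitly.
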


\subsection{Cas des repr\'esentations non supersinguli\`eres}
Fixons $r \in \{0, \ldots , p-1\}$ et $\lambda \in \FFF_{p}^{\times}$. Par exactitude du foncteur d'induction compacte, le Lemme \ref{lemmeind} assure que le $\FFF_{p}[K]$-module port\'e par $\ind{B}{G}(\mu_{\sqrt{\lambda}} \otimes \mu_{\sqrt{\lambda^{-1}}}\omega^{r})$ admet une filtration $K$-\'equivariante de longueur infinie dont les facteur gradu\'es sont d\'ecrit par 
\begin{equation*}
\ind{I}{K}(\chi_r^s)\text{---}\ind{I}{K}(\chi_r^s\mathfrak{a})\text{---}\ind{I}{K}(\chi_r^s\mathfrak{a}^2)\text{---}\dots \ .
\end{equation*}
Par suite, les extensions entre les poids de Serre qui apparaissent dans la filtration par le $K$-socle de $\ind{B}{G}(\mu_{\sqrt{\lambda}} \otimes \mu_{\sqrt{\lambda^{-1}}}\omega^{r})$ sont, \`a torsion par un $\FFF_{p}$-caract\`ere lisse pr\`es, des extensions non scind\'ees de la forme $E_{i}$, $E_{2,0}$, $E_{2,p-1}$ ou $\ind{\kop{}}{K}{\chi_{r}^{s}\mathfrak{a}^j}$ avec $i \in \{0, \ldots , p-1\}$ diff\'erent de $2$ et $j \in \NN$ v\'erifiant $r - 2j \not\equiv 0 \text{ modulo } (p-1)$. Le Lemme \ref{nonscindage} affirme alors que ces objets d\'efinissent des $K_{S}$-extensions non scind\'ees, ce qui prouve le r\'esultat voulu pour les repr\'esentations de la forme $\ind{B_{S}}{G_{S}}(\mu_{\lambda}\omega^{p-1-r})$ gr\^ace aux Th\'eor\`emes \ref{ramla1} et \ref{stefano} et \`a l'\'egalit\'e $I_{S} = \kop{} \cap K_{S}$.\\
On en d\'eduit directement le cas de la repr\'esentation de Steinberg en rappelant que l'on dispose de la suite exacte courte non scind\'ee de $\FFF_{p}[K]$-modules 
$$ 1 \longrightarrow \1 \longrightarrow \ind{B_{S}}{G_{S}}(\1) \longrightarrow St_{S} \longrightarrow 1 \ . $$
Comme le $K_{S}$-socle de la repr\'esentation $\ind{B_{S} \cap K_{S}}{K_{S}}(\1)$ est somme directe du caract\`ere trivial et du $K_{S}$-socle de la repr\'esentation obtenue par inflation de la repr\'esentation de Steinberg du groupe fini $SL_{2}(\FF_{p})$, celui-ci engendrant de plus le $\FFF_{p}[K_{S}]$-module irr\'eductible $St_{S}$, il suffit de remarquer qu'en restriction \`a $I_{S}$, on a $\chi_{2}^{s} = \frak{a}$ pour d\'eduire le r\'esultat voulu de la $K_{S}$-filtration de $\ind{B_{S}}{G_{S}}(\1)$\footnote{Qui correspond au cas $(r,\lambda) = (p-1,1)$.}.

\subsection{Cas des repr\'esentations supersinguli\`eres}
Pour terminer la preuve du Th\'eor\`eme \ref{thmppaldelanote}, nous allons relier les repr\'esentations supersinguli\`eres de $G_{S}$ aux repr\'esentations $\mathrm{\Pi}_{r}$ de $KZ$ d\'efinies dans le Th\'eor\`eme \ref{stefano}.
\begin{lemma}
\label{lemme2}
Soit $r \in \{0, \ldots , p-1\}$. Le $\FFF_{p}[KZ]$-module $\mathrm{\Pi}_{r}$ est stable sous l'action de $G_{S}$, et la repr\'esentation de $G_{S}$ qu'il d\'efinit est alors isomorphe \`a $\pi_{r}$.
\end{lemma}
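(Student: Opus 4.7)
Je procéderai en trois temps pour établir que $\mathrm{\Pi}_r$ est $G_S$-stable et isomorphe à $\pi_r$ comme $G_S$-représentation. D'abord, l'application $K$-équivariante $\sym{r} \hookrightarrow \mathrm{ind}_{KZ}^G \sym{r}$ donnée par $v \mapsto [I_2, v]$ descend en un plongement $K$-équivariant $\sym{r} \hookrightarrow \pi(r,0,1)$. D'après l'assertion (3) du Théorème \ref{stefano}, l'image de ce plongement coïncide avec le $K$-socle du facteur direct $\mathrm{\Pi}_r$. Le vecteur $v_r = \overline{[I_2, x^r]}$, qui est l'image du vecteur de plus haut poids $x^r \in \sym{r}$, appartient donc à $\mathrm{\Pi}_r$.

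Pour établir la $G_S$-stabilité de $\mathrm{\Pi}_r$, j'utiliserai la construction par limite inductive du Corollaire \ref{proposition3.3}. Celle-ci identifie $\mathrm{\Pi}_r$ au sous-$KZ$-module de $\pi(r,0,1)$ engendré par les classes $\overline{[g,v]}$ telles que $\mathrm{val}(\det g) \equiv 0 \pmod{2}$, et $\mathrm{\Pi}_{p-1-r}$ à celui engendré par les classes avec $\mathrm{val}(\det g) \equiv 1 \pmod{2}$~: cela reflète la partition des sommets de l'arbre de Bruhat--Tits de $PGL_2(\QQ_p)$ en deux $G_S$-orbites indexées par la parité de la distance au sommet de base. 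Comme tout $g \in G_S$ satisfait $\det g = 1$, la translation à gauche par $g$ préserve cette parité, d'où la $G_S$-stabilité simultanée de $\mathrm{\Pi}_r$ et de $\mathrm{\Pi}_{p-1-r}$.

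Une fois cette stabilité acquise, $\mathrm{\Pi}_r$ devient un sous-$G_S$-module non nul du module $\pi(r,0,1) = \pi_r \oplus \pi_{p-1-r}$ décrit par le Théorème \ref{ramla1}. Comme $v_r \in \mathrm{\Pi}_r \cap \pi_r$ est non nul et $\pi_r$ est $G_S$-irréductible, on obtient $\pi_r \subset \mathrm{\Pi}_r$~; par symétrie, $\pi_{p-1-r} \subset \mathrm{\Pi}_{p-1-r}$, et la directionalité des deux décompositions $\pi_r \oplus \pi_{p-1-r} = \pi(r,0,1) = \mathrm{\Pi}_r \oplus \mathrm{\Pi}_{p-1-r}$ force $\mathrm{\Pi}_r = \pi_r$ comme sous-ensembles de $\pi(r,0,1)$, d'où l'isomorphisme de $G_S$-représentations voulu. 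L'unique point non formel de ce raisonnement est la deuxième étape~: la $G_S$-stabilité ne se déduit pas directement du Théorème \ref{stefano}, mais demande d'exploiter la description géométrique explicite de $\mathrm{\Pi}_r$ fournie par le Corollaire \ref{proposition3.3} et la construction en limite inductive sous-jacente.
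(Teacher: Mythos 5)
Votre preuve est correcte et suit le même squelette que celle du papier : montrer que $v_{r}$ appartient à $\mathrm{\Pi}_{r}$, établir la $G_{S}$-stabilité, puis comparer les deux décompositions $\pi_{r}\oplus\pi_{p-1-r}=\pi(r,0,1)=\mathrm{\Pi}_{r}\oplus\mathrm{\Pi}_{p-1-r}$ en utilisant l'irréductibilité de $\pi_{r}$. La seule divergence réelle porte sur l'étape de stabilité. Le papier procède via la décomposition de Cartan : $G_{S}$ est engendré par $K_{S}$ et par $\alpha_{0}=\mathrm{diag}(p^{-1},p)$, et un calcul direct fondé sur \cite[Proposition 3.9]{stefano} montre que $\alpha_{0}$ stabilise $\mathrm{\Pi}_{r}$. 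Vous invoquez à la place l'identification de $\mathrm{\Pi}_{r}$ (resp. $\mathrm{\Pi}_{p-1-r}$) avec le sous-espace engendré par les classes $\overline{[g,v]}$ avec $\mathrm{val}(\det g)$ paire (resp. impaire), ce qui rend la stabilité immédiate puisque $\det g=1$ pour $g\in G_{S}$ ; c'est un argument plus conceptuel (la partition des sommets de l'arbre en deux $G_{S}$-orbites), et il donne la stabilité de $\mathrm{\Pi}_{p-1-r}$ gratuitement. Votre attribution de cette description au Corollaire \ref{proposition3.3} est toutefois un peu rapide : le corollaire ne l'énonce pas, et la compatibilité entre la construction par limite inductive des $R_{n}$ et la décomposition pair/impair de $\mathrm{ind}_{KZ}^{G}(\sym{r})$ (l'opérateur $T$ échangeant les deux parités) est précisément le contenu de \cite[Proposition 3.9]{stefano}, c'est-à-dire la même référence que celle sur laquelle repose le calcul direct du papier. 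Les deux arguments sont donc essentiellement équivalents, le vôtre déplaçant la vérification vers la provenance géométrique de $\mathrm{\Pi}_{r}$.
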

\begin{proof} Un calcul direct bas\'e sur la d\'efinition de $\mathrm{\Pi}_{r}$ donn\'ee dans \cite[Proposition 3.9]{stefano} montre que $\mathrm{\Pi}_{r}$ est stable sous l'action de $ \alpha_{0} := \left(\begin{array}{lc} p^{-1} & 0 \\ 0 & p\end{array}\right)$. Comme la d\'ecomposition de Cartan assure que $G_{S}$ est engendr\'e par $K_{S}$ et par $\alpha_{0}$, cela suffit \`a prouver que $\mathrm{\Pi}_{r}$ est un sous-$\FFF_{p}[G_{S}]$-module de $\pi(r,0,1)$. Il suffit ensuite de remarquer que $v_{r}$ appartient \`a $\mathrm{\Pi}_{r}$ pour obtenir que $\pi_{r} = <G_{S} \cdot v_{r}>_{\FFF_{p}}$ est contenu dans $\mathrm{\Pi}_{r}$, puis de comparer les d\'ecompositions de $\pi(r,0,1)$ donn\'ees par les Th\'eor\`emes \ref{ramla1} et \ref{stefano} pour en conclure que les $\FFF_{p}[G_{S}]$-modules $\pi_{r}$ et $\mathrm{\Pi}_{r}$ sont isomorphes. \qed
\end{proof}

\noindent On d\'emontre alors le cas supersingulier du Th\'eor\`eme \ref{thmppaldelanote} comme suit : le Corollaire \ref{proposition3.3} et l'exactitude du foncteur d'induction compacte assurent que $\mathrm{\Pi}_{r}$ admet une filtration $K$-\'equivariante de longueur infinie dont les facteurs gradu\'e sont d\'ecrits par
\begin{equation*}
\sym{r}\text{---}\ind{I}{K}(\chi_r^s\mathfrak{a}^{r+1})\text{---}\ind{I}{K}(\chi_r^s\mathfrak{a}^{r+2})\text{---}\ind{I}{K}(\chi_r^s\mathfrak{a}^{r+3})\text{---}\dots \ .
\end{equation*}
Par suite, les extensions entre poids de Serre qui apparaissent dans la filtration par le $K$-socle de $\mathrm{\Pi}_{r}$ sont, \`a torsion par un caract\`ere lisse pr\`es, des extensions non scind\'ees de la forme $E_{i}$, $E_{2,0}$, $E_{2,p-1}$ ou $\ind{\kop{}}{K}{\chi_{r}^{s}\mathfrak{a}^j}$ avec $i \in \{0, \ldots , p-1\}$ diff\'erent de $2$ et $j \in \NN$ v\'erifiant $r - 2j \not\equiv 0 \text{ modulo } p-1$. Le Lemme \ref{nonscindage} affirme que ces objets d\'efinissent des $K_{S}$-extensions non scind\'ees, ce qui prouve le r\'esultat voulu gr\^ace au Lemme \ref{lemme2}, au Th\'eor\`eme \ref{stefano} et \`a l'\'egalit\'e $I_{S} = \kop{} \cap K_{S}$.

\subsection{Deux remarques}
\begin{enumerate}
\item[\tt $1)$] Les r\'esultats pr\'esent\'es dans cet article restent valables lorsque $p = 3$. La difficult\'e de ce cas r\'eside essentiellement dans la complexit\'e technique des calculs \`a mener pour obtenir la filtration par l'Iwahori-socle des repr\'esentations $R_{\infty}$ introduites dans la Section \ref{RappelsStefano}.
\item[\tt $2)$] Le groupe $SL_{2}(\QQ_{p})$ poss\`ede deux classes de conjugaison de sous-groupes ouverts compacts maximaux hypersp\'eciaux. Il est donc naturel de s'interroger quant \`a l'influence possible du choix de cette classe sur les r\'esultats obtenus ci-avant. Il suffit toutefois de remarquer que l'action de l'\'el\'ement $\alpha = \left(\begin{array}{cc} 1 & 0 \\ 0 & p \end{array}\right)$, dont l'action par conjugaison \'echange les deux classes de sous-groupes ouverts compacts maximaux sus-mentionn\'ees, \'echange aussi les composantes $\mathrm{\Pi}_{r}$ et $\mathrm{\Pi}_{p-1-r}$ introduites dans le Th\'eor\`eme \ref{stefano} pour v\'erifier imm\'ediatement que l'on dispose d'\'enonc\'es parfaitement analogues si l'on s'int\'eresse aux filtrations par le $K_{S}^{\alpha} := \alpha K_{S} \alpha^{-1}$-socle des repr\'esentations lisses irr\'eductibles de $SL_{2}(\QQ_{p})$ sur $\FFF_{p}$. Autrement dit, le choix de la classe de conjugaison de sous-groupes ouverts compacts maximaux hypersp\'eciaux consid\'er\'ee n'a pas d'influence sur les r\'esultats obtenus.
\item[\tt $3)$] Remarquons enfin que pour certaines valeurs de $r$, la filtration par le $K_{S}$-socle de la repr\'esentation supersinguli\`ere $\pi_{r}$ est donn\'ee par la suite suivante de poids de Serre:
\begin{eqnarray*}
	\sym{r}\textbf{---}\sym{p-3-r}\textbf{---}\sym{r+2}
	\textbf{---}\sym{p-5-r}\textbf{---}\sym{r+4}\textbf{---}\dots \ ,
\end{eqnarray*}
et semble ainsi, pour des raisons que nous ne savons pas encore expliquer, refl\'eter la structure de l'arbre de Brauer de $SL_{2}(\FF_{p})$ \cite[Section 9.9]{humph}.
\end{enumerate}

\thebibliography{99}
\bibitem[Ab]{ramla} R. Abdellatif, \emph{Classification des repr\'esentations modulo $p$ de $SL(2,F)$}, soumis (2012).  

\bibitem[AC]{Bourb} N. Bourbaki, \emph{\'El\'ements de math\'ematiques - Alg\`ebre commutative}, Ed. Masson (1983). 

\bibitem[Br]{breuil} Ch. Breuil, \emph{Sur quelques repr\'esentations modulaires et $p$-adiques de $GL_{2}(\QQ_{p})$, I}, Compositio Math. 138 (2003), 165--188.

\bibitem[BP]{BP} Ch. Breuil, V. Pa$\check{\text{s}}$k$\bar{\text{u}}$nas, \emph{Towards a mod $p$ Langlands correspondance}, Memoirs of Amer. Math. Soc. 216 (2012).

\bibitem[Hu]{yongquan} Y. Hu, \emph{Diagrammes canoniques et repr\'esentations modulo $p$ de $\gl{\qp}$}, J. Inst. Math. Jussieu 11 (2012), 67-118.

\bibitem[Hum]{humph} J.E. Humphreys, \emph{Modular representations of finite groups of Lie type}, Lecture Notes Series LMS 326 (2005), Cambrige Univ. Press.

\bibitem[Mo1]{stefano} S. Morra, \emph{Explicit description of irreducible $\gl{\qp}$-representations over $\fpbar$}, J. of Algebra 339 (2011), 252--303.

\bibitem[Mo2]{stefano1} S. Morra, \emph{On some representations of the Iwahori subgroup}, \`a para\^itre \`a J. of Number Theory (2011).

\bibitem[P]{Pask} V. Pa$\check{\text{s}}$k$\bar{\text{u}}$nas, \emph{Extensions for supersingular representations of $GL_{2}(\QQ_{p})$}, Ast\'erisque 331 (2010), 297--333. 
\end{document}